\definecolor{dgreen}{cmyk}{1,.5,1,.2}
\newtheorem{theorem}{Theorem}[section]
\newtheorem{definition}[theorem]{Definition}
\newtheorem{lemma}[theorem]{Lemma}
\newtheorem{proposition}[theorem]{Proposition}
\newtheorem{remark}[theorem]{Remark}
\def\s{\backslash}
\def\os{\otimes^{n,s}}
\def\L{\mathcal{L}}
\def\P{\mathcal{P}^n}
\def\p{\mathcal{P}^n}
\def\Q{\mathcal{Q}}
\def\q{\mathcal{Q}}
\def\U{\mathfrak{A}}
\begin{document}

\title[Natural symmetric tensor norms]{Natural symmetric tensor norms}

\thanks{The first author was partially supported by ANPCyT PICT 05 17-33042, UBACyT Grant X038 and ANPCyT PICT 06 00587. The second author was partially supported by ANPCyT PICT 05 17-33042, UBACyT Grant X863 and a Doctoral fellowship from CONICET}

\author{Daniel Carando}

\author{Daniel Galicer}

\address{Departamento de Matem\'{a}tica - Pab I,
Facultad de Cs. Exactas y Naturales, Universidad de Buenos Aires,
(1428) Buenos Aires, Argentina, and CONICET.}
\email{dcarando@dm.uba.ar} \email{dgalicer@dm.uba.ar}

\begin{abstract}  In the spirit of the work of Grothendieck, we introduce and study natural symmetric n-fold tensor norms. We prove that there are exactly six natural symmetric tensor norms for $n\ge 3$, a noteworthy difference with the 2-fold case in which there are four. We also describe the polynomial ideals associated to these natural symmetric tensor norms. Using a symmetric version of a result of Carne, we establish which natural symmetric tensor norms preserve Banach algebras.
\end{abstract}

\keywords{Natural tensor norms, symmetric tensor products, Banach algebras} \subjclass[2000]{46M05, 46H05, 47A80}

\maketitle
\section*{Introduction}

Alexsander Grothendieck's \emph{``R\'{e}sum\'{e} de la th\'{e}orie m\'{e}trique des produits tensoriels
topologiques''} \cite{Groth53} is considered one of the most influential papers in functional analysis. In this masterpiece, Grothendieck created the basis of what was later known as `local theory', and exhibited the importance of the use of tensor products in the theory of Banach spaces and Operator ideals. As part of his contributions, the \emph{R\'{e}sum\'{e}} contained the list of all \textit{natural} tensor norms. Loosely speaking, this norms come from applying a finite number of basic `operations' to the projective norm.

Grothendieck proved that there were at most fourteen possible natural norms, but he did not know the exact dominations among them, or if there was a possible reduction on the table of natural norms (in fact this was one of the open problems posed in the \emph{R\'{e}sum\'{e}}). Fortunately, this was solved, several years later, thanks to very deep ideas of Gordon and Lewis \cite{GordonLewis1974}.
All this results are now classical and can be found for example in \cite[Section 27]{DefFlo93} and \cite[4.4.2.]{DieFouSwa08}.

Motivated by the increasing interest in theory of symmetric tensor products, we introduce and study natural s-tensor norms of arbitrary order, i.e., tensor norms defined on symmetric tensor products of Banach spaces and which are natural in the sense given by Grothendieck.
Among the fourteen non-equivalent natural 2-fold tensor norms, there are exactly four which are symmetric. The s-tensor version of these four tensor norms are, as expected, the only natural ones for symmetric 2-fold tensor products. One of our main results (Theorem~\ref{son seis}) shows that for $n\ge 3$ we have actually six natural s-tensor norms, a noteworthy difference with the 2-fold case. In this theorem we also describe the maximal ideals of polynomials associated to the natural norms. For this, we use the characterization of the maximal polynomial ideals associated to the injective/projective hull of any s-tensor norm given in Theorem \ref{descripcion}.

The 2-fold tensor norm $w_2'$  is one of the 14 Grothendieck's natural tensor norms, since it is equivalent to $\s \varepsilon_{2} /$ (see \cite[20.17.]{DefFlo93}). In fact, it is also equivalent $ \s / \pi_{2}  \s /$. The same equivalence holds for the analogous 2-fold s-tensor norms. When we pass to $n$-fold tensor products, $ \s / \pi_{n,s}  \s /$ and $\s \varepsilon_{n,s} /$  are no longer equivalent. {In Theorem \ref{no son equivalentes} we prove that, in fact, they are always different on infinite dimensional spaces (the same holds for the norms $/ \pi_{n,s} \s$ and $/ \s \varepsilon_{n,s} / \s$).
In other words, we can say that $w_2'$ splits into two different s-tensor norms when passing to tensor products of order $n\geq3$.
One may wonder which of these s-tensor norms of high order is the most natural extension of the 2-fold symmetric analogue of $w_2'$. We will see that, surprisingly,  two  \emph{good} properties of $w_2'$ are, in a sense, a consequence of it being equivalent to $ \s / \pi_{2,s}  \s /$ rather than to the most simple $\s \varepsilon_{2,s} /$.
The first property we consider is the relationship with its adjoint s-tensor norm. The second is related to the preservation of Banach algebra structures.
Carne in \cite{Carne78} showed that there are exactly four natural 2-fold tensor norms that preserve Banach algebras, two of which are symmetric: $\pi_2$ and $\s \varepsilon_{2} /$. Based on his work we describe in Section~\ref{section preserving} which natural s-tensor norms preserve the algebra structure. We show that the two s-tensor norms preserving Banach algebras are $\pi_{n,s}$ and $ \s / \pi_{n,s}  \s /$. Thus, one  may think that $ \s / \pi_{n,s}  \s /$ is the natural extension of the symmetric analogue of $w_2'$ to tensor norms of higher orders.

All our results on s-tensor norms have their analogous for symmetric tensor norms on full tensor products.

\smallskip
We refer to \cite{DefFlo93} for the theory of 2-fold tensor norms and operator ideals, and to \cite{Flo97,Flo01,Flo02(On-ideals),Flo02(Ultrastability)} for symmetric and full tensor products and polynomial ideals.

\section{Preliminaries}
In this section we recall some definitions and results on the theory of symmetric tensor products and Banach polynomial ideals.

Let $\varepsilon_{n,s}$ and $\pi_{n,s}$ stand for the injective and projective symmetric tensor norms of order $n$ respectively.
We say that  $\beta$ is a s-tensor norm  of order $n$ if $\beta$ assigns to each Banach space $E$ a norm $\beta \big(\; . \;; \otimes^{n,s} E \big)$ on the $n$-fold symmetric tensor product $\otimes^{n,s} E$ such that
\begin{enumerate}
\item $\varepsilon_{n,s} \leq \beta \leq \pi_{n,s}$ on $\otimes^{n,s} E$.
\item $\| \otimes^{n,s} T :  \otimes^{n,s}_\beta E   \to \otimes^{n,s}_\beta F \| \leq \|T\|^n$ for each operator $T~\in~\mathcal{L}(E, F)$.
\end{enumerate}

The s-tensor norm $\beta$ is said to be finitely generated if for all $E \in  BAN$ (the class of all Banach spaces) and $z \in \otimes^{n,s} E$
$$ \beta (z, \otimes^{n,s}E) = \inf \{ \alpha(z, \otimes^{n,s}M) : M \in FIN(E), z \in \otimes^{n,s}M \},$$ where $FIN(E)$ denotes the class of all finite dimensional subspaces of $E$.
In this article we will only work with finitely generated tensor norms and, therefore, all tensor norms will be assumed to be so.

For $\beta$ an s-tensor norm of order $n$, its dual tensor norm $\beta'$ is defined on $FIN$ by

$$   \otimes^{n,s}_{\beta'} M :\overset 1 = \big( \otimes^{n,s}_{\beta} M'  \big)'$$
and extended to $BAN$ as
$$ \beta' ( z, \otimes^{n,s} E ) : = \inf \{ \beta' (z, \otimes^{n,s} M ) : M \in FIN(E),\  z \in \otimes^{n,s} M \}.$$

\smallskip
Similarly, a tensor norm $\alpha$ of order $n$ assigns to every $n$-tuple of Banach spaces $(E_1, \dots, E_n)$ a norm $\alpha \big(\; . \; ; \otimes_{i=1}^n E_i \big)$ on the $n$-fold (full) tensor product $\otimes_{i=1}^n E_i$ such that

\begin{enumerate}
\item $\varepsilon_{n} \leq \alpha \leq \pi_n$ on $\otimes_{i=1}^n E_i$.
\item $\| \otimes_{i=1}^n T_i :  \big( \otimes_{i=1}^n E_i, \alpha \big) \to \big( \otimes_{i=1}^n F_i, \alpha \big) \|  \leq \|T_1\| \dots \|T_n\|$ for each set of operator $T_i~\in~\mathcal{L}(E_i, F_i)$, $i=1, \dots, n$.
\end{enumerate}
Here, $\varepsilon_{n}$ and $\pi_{n}$ stand for the injective and projective full tensor norms of order $n$ respectively.

We often call these tensor norms ``full tensor norms'', in the sense that they are defined on the full tensor product, to distinguish them from the s-tensor norms, that are defined on symmetric tensor products.
The full tensor norm $\alpha$ is finitely generated if for all $E_i \in BAN$ and $z$ in $\otimes_{i=1}^n E_i$
$$ \alpha ( z, \otimes_{i=1}^n E_i ) : = \inf \{ \alpha (z, \otimes_{i=1}^n M_n ) : M_i\in FIN(E_i) (i=1,\dots,n), \ z \in \otimes_{i=1}^n M_i, \}.$$

If $\alpha$ is a full tensor norm of order $n$, then the dual tensor norm $\alpha'$ is defined on FIN  by

$$  \big( \otimes_{i=1}^n M_i, \alpha' \big) :\overset 1 = [\big( \otimes_{i=1}^n M_i', \alpha \big)]'$$
and on BAN  by
$$ \alpha' ( z, \otimes_{i=1}^n E_i ) : = \inf \{ \alpha' (z, \otimes_{i=1}^n M_n ) : M_i\in FIN(E_i) (i=1,\dots,n)\  z \in \otimes_{i=1}^n M_i \}.$$

\smallskip

Let us recall some definitions on the theory of Banach polynomial ideals
\cite{Flo02(On-ideals)}. A \emph{Banach ideal of continuous scalar valued
$n$-homogeneous polynomials} is a pair
$(\mathcal{Q},\|\cdot\|_{\mathcal Q})$ such that:
\begin{enumerate}
\item[(i)] $\mathcal{Q}(E)=\mathcal Q \cap \mathcal{
P}^n(E)$ is a linear subspace of $\mathcal{P}^n(E)$ and $\|\cdot\|_{\mathcal Q(E)}$ (the restriction of $\|\cdot\|_{\mathcal Q}$ to $\mathcal{Q}(E)$) is a norm which makes
$(\mathcal{Q},\|\cdot\|_{\mathcal Q(E)})$ a Banach space.

\item[(ii)] If $T\in \mathcal{L} (E_1,E)$, $p \in \mathcal{Q}(E)$ then $p\circ T\in \mathcal{Q}(E_1)$ and $$ \|
p\circ T\|_{\mathcal{Q}(E_1)}\le  \|P\|_{\mathcal{Q}(E)} \| T\|^n.$$

\item[(iii)] $z\mapsto z^n$ belongs to $\mathcal{Q}(\mathbb K)$
and has norm 1.
\end{enumerate}

Let $(\mathcal{Q},\|\cdot\|_{\mathcal Q})$ be the Banach ideal of continuous scalar valued
$n$-homogeneous polynomials and, for $p \in \mathcal{
P}^n(E)$, define
$$\|p\|_{\Q^{max}(E)}:= \sup \{ \|p|_M\|_{\Q(M)} : M \in FIN(E) \} \in [0, \infty].$$
The maximal hull of $\mathcal{Q}$ is the ideal given by $ \mathcal{Q}^{max} := \{p \in \P : \|p\|_{\Q^{max}} < \infty \}$. An ideal $\mathcal{Q}$ is said to be maximal if $\mathcal{Q} \overset{1}{=} \mathcal{Q}^{max}$.

Also, for $q \in \P$ we define
$$\| q \|_{\Q^{*}}:= \sup \{ | \langle q|_M ,p \rangle | M \in FIN(E), \|p \|_{\Q(M')} \leq 1 \} \in [0,\infty].$$
We will denote $\Q^{*}$ the class of all polynomials
$q$ such that $\| q \|_{\Q^{*}} < \infty.$

The \emph{ s-tensor norm $\gamma$} associated to the Banach polynomial ideal $\Q$ is the unique tensor norm satisfying $$\Q(M) \overset 1 =  \otimes^{n,s}_{\gamma} M,$$ for every finite dimensional space $M$.
The polynomial representation theorem asserts that, if $\Q$ is maximal, then we have $$\Q(E) \overset 1 =  \big( \widetilde{\otimes}^{n,s}_{\gamma'} E \big)',$$ for every Banach space $E$ \cite[3.2]{Flo02(Ultrastability)}. It is not difficult to prove that $\big(\Q^{*}, \| \; \|_{\Q^{*}}\big)$ is  a maximal Banach ideal of continuous $n$-homogeneous polynomials. Moreover, if $\gamma$ is the s-tensor norm associated to the ideal $\Q$ then $ \gamma'$ is the one associated to $\Q^{*}$.

We will sometimes denote by $\Q_\beta$ the maximal Banach ideal of $\beta$-continuous $n$-homogeneous polynomials, that is, $\Q_\beta(E):= \big(\widetilde{\otimes}^{n,s}_\beta E \;\big)'$. We observe that, with this notation, $\Q_\beta$ is the unique maximal polynomial ideal associated to the s-tensor norm $\beta'$.

\smallskip
Let $(\U, \|\; \|_{\U})$ be a Banach ideal of operator. The composition ideal $ \mathcal{Q} \circ \U$ is defined in the following way: a polynomial $p$ belongs to $ \mathcal{Q} \circ \U$ if it admits a factorization
\begin{equation} \label{factorizacion-minimal}
\xymatrix{ E  \ar[rr]^{p} \ar[rd]^{T} & & {\mathbb{K}} \\
& {F} \ar[ru]^{q} & },
\end{equation}
where $F$ is a Banach space, $T \in \U( E,F)$ and $q$ is in $\Q(F)$.
The composition norm is given by $\|p\|_{\Q \circ \U} := \inf \{  \|q\|_{\Q} \|T\|^n_{\U} \}$, where the infimum runs over all possible factorizations as in~(\ref{factorizacion-minimal}).

For $p \in \P$ we define
$$\| p \|_{\Q \circ \U^{-1}}:= \sup \{ \|p \circ T \|_{\Q} : T \in \U, \; \|T \|_{\U}\leq 1,  \; P \circ T  \;\mbox{is defined}\} \in [0,\infty].$$
We will denote $\Q \circ \U^{-1}$ the class of all polynomials
$p$ such that $\| p \|_{\Q \circ \U^{-1}} < \infty.$ It is not
difficult to prove that $\big(\Q \circ \U^{-1}, \| \; \|_{\Q
\circ \U^{-1}}\big)$ is Banach ideal of continuous
$n$-homogeneous polynomials with the property that $p \in \Q
\circ \U^{-1}$ if and only if $p \circ T \in \Q$ for all $T \in
\U$. In other words, $\Q \circ \U^{-1}$ is the largest ideal
satisfying $(\Q \circ \U^{-1}) \circ \U \subset \Q$.

By $\P_f$ we will denote the class of finite type polynomials.
{We say that a polynomial ideal $\Q$ is \textit{accessible} if the following condition holds:
for every Banach space $E$, $q \in \p_f(E)$ and $\varepsilon > 0$, there is a closed finite codimensional space $L \subset E$ and $p\in \p(E/L)$ such that $q = p \circ Q_{L}^E$ (where $Q_{L}^E$ is the canonical quotient map) and $\|p\|_{\q}~\leq~(1 + \varepsilon)~\|q\|_{\q}.$}

Let $M$ be a finite dimensional Banach space. For $p \in P(M)$ and  $q\in \P(M')$, we denote by $\langle q,p \rangle$ the trace-duality of polynomials, defined for $p=(x')^n$ and $q=x^n$ as
$$ \langle  p, q \rangle = x'(x)^n,$$
and extended by linearity \cite[1.13]{Flo97}.

{Finally, a surjective mapping $T: E \to F$ is called \emph{a metric surjection} if $$\|Q(x)\|_F=\inf\{\|y\|_E : Q(y)=x \},$$ for all $x \in E$. As usual, a mapping $I : E \to F$ is called \emph{isometry} if $\|Ix\|_F = \|x\|_E$ for all $x \in E$. We will use the notation  $\overset 1 \twoheadrightarrow$ and  $\overset 1 \hookrightarrow$ to indicate a metric surjection or an isometry, respectively.
We also write $E\overset 1=F$ if $E$ and $F$ are isometrically isomorphic Banach spaces (i.e. there exist an surjective isometry $I : E \to F$).}
For a Banach space $E$ with unit ball $B_E$, we call the mapping $Q_E : \ell_1(B_E) \overset 1 \twoheadrightarrow E$ given by
\begin{equation}\label{canonical quotient}Q_E\big((a_x)_{x\in B_E} \big)= \sum_{x\in B_E} a_x x\end{equation}
the \emph{canonical quotient mapping}.
Also, we consider the \emph{canonical embedding} $I_E: E\to \ell_\infty(B_{E'})$ given by \begin{equation}\label{canonical embedding}I_E(x)=\big(x'(x)\big)_{x'\in B_{E'}}.\end{equation}

\section{Projective and Injective hulls of an s-tensor norm}\label{adjoint of the composition}

In this section we will define the projective and injective hulls of an s-tensor norm and describe their associated maximal Banach ideals of polynomials.

The projective and injective associates (or hulls) of $\beta$ will be  denoted, by extrapolation of the 2-fold full case, as $\s \beta /$ and $/ \beta \s$ respectively. The projective associate of $\beta$ will be the (unique) smallest projective tensor norm greater than $\beta$. Following some ideas from \cite[Theorem 20.6.]{DefFlo93} we have
$$ \otimes^{n,s} Q_E \colon \otimes^{n,s}_\beta \ell_1(E) \overset 1 \twoheadrightarrow  \otimes^{n,s}_{\s \beta /}  E,$$ where $Q_E : \ell_1(B_E) \twoheadrightarrow E$ is the canonical quotient map defined in (\ref{canonical quotient})

The injective associate of $\beta$ will be the (unique) greatest injective tensor norm smaller than $\beta$.  As in \cite[Theorem 20.7.]{DefFlo93} we get,
$$ \otimes^{n,s} I_E \colon  \otimes^{n,s}_{/ \beta \s} E  \overset 1 \hookrightarrow  \otimes^{n,s}_\beta \ell_{\infty}(B_{E'}),$$  where $I_E$ is the canonical embedding (\ref{canonical embedding}).

The projective and injective associates for a full tensor norm $\alpha$ can be defined in a similar way and satisfy
$$ \big( \otimes_{i=1}^n \ell_1(E_i),  \alpha  \big) \overset 1 \twoheadrightarrow \big( \otimes_{i=1}^n  E_i,   \s \alpha /  \big), \;\;\; \big( \otimes_{i=1}^n E_i , / \alpha \s \big) \overset 1 \hookrightarrow  \big( \otimes_{i=1}^n \ell_{\infty}(B_{E_i'}),  \alpha  \big).$$

The following duality relations for an s-tensor norm $\beta$ or a full tensor norms $\alpha$ are obtained proceeding as in
\cite[Proposition 20.10.]{DefFlo93}:
$$  (/ \beta \s)' = \s \beta' /, \; \; \;   (\s \beta /)' = / \beta' \s,\; \; \; (/ \alpha \s)' = \s \alpha ' /, \; \; \; (\s \alpha /)' = /\alpha '\s.  $$

Just as in \cite[Corollary 20.8]{DefFlo93}, if $E$ is an $\mathcal{L}_{1,\lambda}$ space for every $\lambda >1$, then $\beta$ and $ \s \beta /$ coincide (isometrically) on $\otimes^{n,s} E$. On the other hand, if $E$ is an $\mathcal{L}_{\infty,\lambda}$ space for every $\lambda >1$, then $\beta$ and $ / \beta \s$ coincide in $\otimes^{n,s} E$.
A similar result holds for full tensor norms: if $E_1, \dots, E_n$ are $\mathcal{L}_{1,\lambda}$ spaces for every $\lambda >1$ then $\alpha$ and $ \s \alpha /$ are equal on $\otimes_{i=1}^n E_i$. On the other hand, if $E_1, \dots, E_n$ are $\mathcal{L}_{\infty,\lambda}$ spaces for every $\lambda >1$ then $\alpha$ and $ / \alpha \s$ coincide in $\otimes_{i=1}^n E_i$.

It is not difficult to prove that an $n$-homogeneous polynomial $p$ belongs to $\q_{\s \beta /}(E)$ if and only if $p \circ Q_E \in \q_{\beta}(\ell_1(B_E))$.
Moreover,
\begin{equation}\label{identproyec}
\|p\|_{\q_{\s \beta /}(E)}= \|p \circ Q_E \|_{\q_{\beta}(\ell_1(B_E))}.
\end{equation}
On the other hand, an $n$-homogeneous polynomial $p$ belongs to $\q_{/ \beta \s}(E)$ if and only if there
exist an $n$-homogeneous polynomial $\overline{p} \in \q_{\beta}(\ell_{\infty}(B_{E'}))$ such that $\overline{p} \circ I_E = p$ and
\begin{equation}\label{identinyec}
\|p\|_{\q_{/ \beta \s}(E)}=
\|\overline{p} \|_{\q_{ \beta }(\ell_{\infty}(B_{E'}))}.
\end{equation}
 In other words, $/ \beta \s$-continous polynomials are those that can be extended to $\beta$-continuous polynomials on $\ell_{\infty}(B_{E'})$.
As a consequence, the injective associate of the projective s-tensor norm, $/ \pi_{n,s} \s$, is the predual norm of the ideal of extendible polynomials $\P_e$ (see \cite{Carando99}, and also \cite{KirRy98}, where this norm is constructed in a different way). The norm $/ \pi_{n,s} \s$ usually appears in the literature denoted by $\eta$.

The description of the $n$-linear forms belonging to $\big( \otimes_{i=1}^n  E_i,   \s \alpha /  \big)'$ or to $\big( \otimes_{i=1}^n  E_i,  /  \alpha \s  \big)'$ is analogous to that for polynomials.

The following result describes the maximal Banach ideal of polynomials associated to the projective/injective hull of an s-tensor norm in terms of  composition ideals.
\begin{theorem}\label{descripcion}
Let $\beta$ be an s-tensor norm of order $n$. We have the following identities:
$$ \Q_{/\beta\s} \overset{1}{=} \Q_\beta \circ \mathcal{L}_\infty \;\; \mbox{and} \;\; \Q_{\s \beta /} \overset{1}{=}  \Q_\beta \circ (\mathcal{L}_1)^{-1}.$$
\end{theorem}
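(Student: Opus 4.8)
The plan is to prove the two identities separately, each by a double inclusion with control of norms, using the concrete descriptions of $\Q_{/\beta\s}$ and $\Q_{\s\beta/}$ given in equations~(\ref{identinyec}) and~(\ref{identproyec}), together with the definitions of the composition ideal $\Q_\beta\circ\mathcal{L}_\infty$ and the ``quotient'' ideal $\Q_\beta\circ(\mathcal{L}_1)^{-1}$. Recall that a polynomial $p$ lies in $\Q_{/\beta\s}(E)$ exactly when it extends to a $\beta$-continuous polynomial $\overline{p}$ on $\ell_\infty(B_{E'})$ with equal norm, and $p\in\Q_{\s\beta/}(E)$ exactly when $p\circ Q_E\in\Q_\beta(\ell_1(B_E))$ with equal norm. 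The canonical embedding $I_E\colon E\to\ell_\infty(B_{E'})$ and the canonical quotient $Q_E\colon\ell_1(B_E)\twoheadrightarrow E$ are the universal maps witnessing membership in $\mathcal{L}_\infty$ and (as a surjection) in $\mathcal{L}_1$; this is the bridge between the hull descriptions and the composition descriptions.

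For the first identity, $\Q_{/\beta\s}\overset{1}{=}\Q_\beta\circ\mathcal{L}_\infty$: if $p\in\Q_\beta\circ\mathcal{L}_\infty(E)$ with factorization $p=q\circ T$, $T\in\mathcal{L}_\infty(E,F)$, $q\in\Q_\beta(F)$, I would use the ideal property of $\mathcal{L}_\infty$ and the metric extension property of $\mathcal{L}_\infty$-spaces to push the factorization through $\ell_\infty(B_{E'})$: since $T$ factors (up to $\varepsilon$) through an $\mathcal{L}_{\infty,\lambda}$-space and $I_E$ is an isometry, one builds an extension of $p$ to $\ell_\infty(B_{E'})$ whose $\Q_\beta$-norm is controlled, using that $\beta$ and $/\beta\s$ agree on $\mathcal{L}_{\infty,\lambda}$-spaces (stated in the excerpt) and that $\Q_\beta=(\widetilde{\otimes}^{n,s}_\beta\cdot)'$ is maximal hence well-behaved under the finite-dimensional approximations implicit in the $\mathcal{L}_\infty$ definition. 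Conversely, if $p\in\Q_{/\beta\s}(E)$, then by~(\ref{identinyec}) it factors as $p=\overline{p}\circ I_E$ with $\overline{p}\in\Q_\beta(\ell_\infty(B_{E'}))$ and $\|\overline{p}\|=\|p\|$, and since $I_E\in\mathcal{L}_\infty(E,\ell_\infty(B_{E'}))$ with $\|I_E\|_{\mathcal{L}_\infty}=1$ (it is even an isometry), this is exactly a norm-one composition factorization, giving $\|p\|_{\Q_\beta\circ\mathcal{L}_\infty}\le\|p\|_{\Q_{/\beta\s}}$.

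For the second identity, $\Q_{\s\beta/}\overset{1}{=}\Q_\beta\circ(\mathcal{L}_1)^{-1}$: here I recall that $\Q_\beta\circ(\mathcal{L}_1)^{-1}$ is by definition the largest ideal with $\big(\Q_\beta\circ(\mathcal{L}_1)^{-1}\big)\circ\mathcal{L}_1\subset\Q_\beta$, and $\|p\|_{\Q_\beta\circ(\mathcal{L}_1)^{-1}}=\sup\{\|p\circ T\|_{\Q_\beta}: T\in\mathcal{L}_1,\ \|T\|_{\mathcal{L}_1}\le 1\}$. For $p\in\Q_{\s\beta/}(E)$, by~(\ref{identproyec}) we have $\|p\circ Q_E\|_{\Q_\beta(\ell_1(B_E))}=\|p\|_{\Q_{\s\beta/}(E)}$; since $Q_E$ is a metric surjection it has $\mathcal{L}_1$-norm one, and any $T\in\mathcal{L}_1(E_1,E)$ composed with $p$ can be handled by factoring $T$ through $\ell_1$-spaces and invoking that $\beta=\s\beta/$ on $\mathcal{L}_{1,\lambda}$-spaces (again from the excerpt), yielding $\|p\circ T\|_{\Q_\beta}\le\|p\|_{\Q_{\s\beta/}}\|T\|_{\mathcal{L}_1}^n$, so $p\in\Q_\beta\circ(\mathcal{L}_1)^{-1}$ with the right norm bound. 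Conversely, if $p\in\Q_\beta\circ(\mathcal{L}_1)^{-1}(E)$, then applying the defining supremum to $T=Q_E$ (which has $\mathcal{L}_1$-norm one) gives $\|p\circ Q_E\|_{\Q_\beta(\ell_1(B_E))}\le\|p\|_{\Q_\beta\circ(\mathcal{L}_1)^{-1}(E)}$, and by~(\ref{identproyec}) the left side is $\|p\|_{\Q_{\s\beta/}(E)}$, closing the loop.

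The routine parts are the norm-one observations about $I_E$ and $Q_E$ and the bookkeeping of the composition norms. The main obstacle is the ``extension/lifting through an abstract $\mathcal{L}_\infty$ (resp. $\mathcal{L}_1$) space'' step in the harder inclusion of each identity: one must pass from a factorization through an arbitrary $\mathcal{L}_{\infty,\lambda}$- (resp. $\mathcal{L}_{1,\lambda}$-) space to one through the specific universal space $\ell_\infty(B_{E'})$ (resp. $\ell_1(B_E)$) while keeping the $\Q_\beta$-norm under control up to the factor $\lambda^n$, then letting $\lambda\to 1$; this is where the finite generation of $\beta$, the maximality of $\Q_\beta$, and the coincidence of $\beta$ with its hull on $\mathcal{L}_{\infty,\lambda}$- and $\mathcal{L}_{1,\lambda}$-spaces all have to be combined carefully. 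I expect this to mirror, in the polynomial/$n$-fold symmetric setting, the classical $2$-fold arguments of \cite[Section 25]{DefFlo93} relating $\s\alpha/$ and $/\alpha\s$ to composition ideals $\mathcal{A}\circ\mathcal{L}_1^{-1}$ and $\mathcal{A}\circ\mathcal{L}_\infty$.
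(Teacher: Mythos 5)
Your treatment of the first identity is essentially the paper's own proof: the easy inclusion via $p=\overline{p}\circ I_E$ is identical, and for the hard inclusion you invoke the same ingredients (the factorization of an $\mathcal{L}_\infty$-operator, the coincidence of $\beta$ and $/\beta\s$ on $\mathcal{L}_\infty$-spaces, maximality of $\Q_\beta$). For the second identity, however, you take a genuinely different route. The paper does not argue directly: it first proves an abstract duality formula $(\Q\circ\U)^{*}\overset{1}{=}\Q^{*}\circ(\U^{dual})^{-1}$ (via a polynomial version of the Cyclic Composition Theorem, using right-accessibility of $\U^{dual}$ and trace duality on finite-dimensional subspaces), and then obtains $\Q_{\s\beta/}\overset{1}{=}\Q_\beta\circ(\mathcal{L}_1)^{-1}$ by applying this to $\Q_{/\beta'\s}\overset{1}{=}\Q_{\beta'}\circ\mathcal{L}_\infty$ together with $\mathcal{L}_1=\mathcal{L}_\infty^{dual}$ and the maximality and accessibility of $\mathcal{L}_\infty$. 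Your direct double inclusion — testing against $T=Q_E$ (which indeed has $\mathcal{L}_1$-norm one) for one direction, and factoring an arbitrary $T\in\mathcal{L}_1$ through an $L_1(\mu)$-space for the other — is viable and arguably more symmetric with the first identity; what you lose is the reusable Proposition on adjoints of composition ideals, which the paper needs anyway and which packages the accessibility bookkeeping once and for all. What you gain is that you never need the cyclic composition machinery or the accessibility of $\mathcal{L}_\infty^{dual}$.

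One point you should make precise in both hard inclusions, since as written it is the weakest link: an operator $T\in\mathcal{L}_\infty(E,F)$ (resp.\ $\mathcal{L}_1$) is one for which $J_F\circ T$ factors through an $L_\infty(\mu)$ (resp.\ $L_1(\mu)$) space, so the factorization lands in the bidual $F''$, where $q$ is not defined. The paper resolves this by replacing $q$ with its canonical extension $\overline{q}\in\Q_\beta(F'')$, which has the \emph{same} $\Q_\beta$-norm because $\Q_\beta$ is maximal (this is the result cited from \cite{CarGal-extending}); only then does one compose with $S:L_\infty(\mu)\to F''$ and use that $\Q_\beta$ and $\Q_{/\beta\s}$ coincide isometrically on $L_\infty(\mu)$. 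Your appeal to ``maximality being well-behaved under finite-dimensional approximations'' gestures at this but does not name the actual tool; without the isometric extension to the bidual the norm estimate $\|p\|_{\Q_{/\beta\s}}\le(1+\varepsilon)^{n+1}\|p\|_{\Q_\beta\circ\mathcal{L}_\infty}$ does not close. The same caveat applies verbatim to your direct argument for the $\mathcal{L}_1$ case, where you additionally need the canonical extension of $p$ itself to $E''$ to compose with $S:L_1(\mu)\to E''$.
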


To prove this, we will need a polynomial version of the Cyclic Composition Theorem \cite[Theorem 25.4.]{DefFlo93}.

\begin{lemma}\label{cyclic Composition Theorem}
Let $(\Q_1, \| \; \|_{\Q_1})$, $(\Q_2, \| \; \|_{\Q_2})$ be two
Banach ideals of continuous $n$-homogeneous polynomials and $(\U,
\|\; \|_{\U})$ a Banach operator ideal with $(\U^{dual},\|\; \|_{\U^{dual}})$ right-accessible. If $$ \Q_1 \circ
\U \subset \Q_2,$$ and $\|\; \|_{\Q_2} \leq k \|\; \|_{\Q_1 \;
\circ \; \U}$ for some positive constant $k$, then we have $$ \Q_2^{*} \circ \U^{dual} \subset \Q_1^{*},$$
with $\| \; \|_{\Q_1^{*}} \leq k \| \; \|_{\Q_2^{*} \; \circ \;
\U^{dual}}$.
\end{lemma}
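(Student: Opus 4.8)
The plan is to imitate the proof of the Cyclic Composition Theorem for operator ideals \cite[Theorem 25.4.]{DefFlo93}, adapting it to the polynomial setting via trace duality. Recall that for a finite dimensional space $M$ one has the isometric identity $\Q(M)^* \overset 1 = \Q^{*}(M')$, coming from trace-duality of polynomials, and that for a maximal polynomial ideal $\Q$ the representation theorem gives $\Q(E)\overset 1 = (\widetilde\otimes^{n,s}_{\gamma'}E)'$ where $\gamma$ is the tensor norm associated to $\Q$. The key dictionary is: a containment $\Q_1\circ\U\subset\Q_2$ with norm control translates, on finite dimensional spaces, into a factorization statement about polynomials, and dualizing a composition factorization $E\overset{T}{\to}F\overset{q}{\to}\mathbb K$ produces a factorization through $F'$ using the adjoint $T^{dual}\in\U^{dual}(F',E')$.

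First I would reduce everything to finite dimensional spaces: since $\Q_2^*$ and $\Q_1^*$ are maximal ideals (as noted in the Preliminaries, $(\Q^*,\|\cdot\|_{\Q^*})$ is always maximal), it suffices to prove the inclusion $\Q_2^*\circ\U^{dual}\subset\Q_1^*$ with the constant $k$ on finite dimensional spaces, and then pass to the maximal hull. So fix a finite dimensional space $N$, take $p\in\Q_2^*(N)$ and $S\in\U^{dual}(N,N_1)$ with $N_1$ finite dimensional; I must show $p\circ S\in\Q_1^*(N_1)$ with $\|p\circ S\|_{\Q_1^*}\le k\|p\|_{\Q_2^*}\|S\|_{\U^{dual}}^n$. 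By the definition of $\|\cdot\|_{\Q_1^*}$ on $N_1$, this amounts to estimating the trace-duality pairing $|\langle p\circ S, r\rangle|$ against all $r\in\Q_1(N_1')$ with $\|r\|_{\Q_1(N_1')}\le 1$.

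Next I would unwind this pairing. Using the naturality of trace duality under composition, $\langle p\circ S, r\rangle$ should be rewritten as $\langle p, r\circ S'\rangle$ up to the natural identifications (here $S'\colon N_1'\to N'$ is the adjoint, and $S'\in\U(N_1',N')$ with the same norm, since $\U^{dual\,dual}=\U$ on $FIN$ with right-accessibility of $\U^{dual}$ giving the needed isometric behaviour). Since $r\in\Q_1(N_1')$, the hypothesis $\Q_1\circ\U\subset\Q_2$ with $\|\cdot\|_{\Q_2}\le k\|\cdot\|_{\Q_1\circ\U}$ yields $r\circ S'\in\Q_2(N')$ with $\|r\circ S'\|_{\Q_2(N')}\le k\|r\|_{\Q_1(N_1')}\|S'\|_{\U}^n\le k\|S\|_{\U^{dual}}^n$. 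Then, because $p\in\Q_2^*(N)$, the very definition of the $\Q_2^*$-norm bounds $|\langle p, r\circ S'\rangle|\le\|p\|_{\Q_2^*(N)}\|r\circ S'\|_{\Q_2(N')}\le k\|p\|_{\Q_2^*(N)}\|S\|_{\U^{dual}}^n$. Taking the supremum over $r$ in the unit ball gives exactly $\|p\circ S\|_{\Q_1^*(N_1)}\le k\|p\|_{\Q_2^*(N)}\|S\|_{\U^{dual}}^n$, as desired.

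The main obstacle, as in the operator-ideal original, is the bookkeeping around accessibility: the clean chain above uses that one can realize the relevant polynomials and operators on finite dimensional spaces with norms arbitrarily close to the ``global'' ones, and in particular that $S'$ lies in $\U$ with controlled norm --- this is precisely where right-accessibility of $\U^{dual}$ (equivalently a suitable accessibility of $\U$) enters, to move between $\U$ and $\U^{dual}$ on $FIN$ without losing constants. One must be careful that the finite type / finite dimensional approximations needed for the trace-duality manipulations are compatible with the factorization appearing in $\Q_1\circ\U$; handling an extra $\varepsilon$ in the accessibility step and then letting $\varepsilon\to 0$ is the routine-but-delicate part. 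Everything else is a formal transposition of \cite[Theorem 25.4.]{DefFlo93} with operators replaced by $n$-homogeneous polynomials and the bilinear trace replaced by the polynomial trace-duality of \cite[1.13]{Flo97}.
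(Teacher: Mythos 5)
Your proposal is correct and follows essentially the same route as the paper: estimate the trace-duality pairing $\langle q|_M,p\rangle$ for $M\in FIN(E)$ by moving the $\U^{dual}$-factor across the duality as an adjoint lying in $\U$, apply the hypothesis $\Q_1\circ\U\subset\Q_2$ to the composed polynomial, and then pair with the $\Q_2^{*}$-polynomial, the reduction to $FIN$ being justified by maximality of $\Q_1^{*}$ exactly as the paper's use of the definition of $\|\cdot\|_{\Q_1^{*}}$. The one point to sharpen is where right-accessibility of $\U^{dual}$ actually enters: not to pass between $\U$ and $\U^{dual}$ on $FIN$ (that is automatic, with equal norms, from the definition of the dual ideal), but to replace the factorization $q=q_1\circ T$ through a possibly infinite-dimensional middle space $F$ by a factorization of $T|_M$ through some $N\in FIN(F)$ at the cost of $(1+\varepsilon)$ --- which is precisely the step the paper performs.
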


\begin{proof}
Fix $q \in\Q_2^{*} \circ \U^{dual} (E)$, $M \in FIN(E)$ and $p \in \Q_1(M')$ with $\| p \|_{\Q_1(M')} \leq 1$. For $\varepsilon > 0$, we take $T \in \U^{dual}(E,F)$ and $q_1 \in \Q_2^{*}(F)$ such that $q= q_1 \circ T$ and $$\|q_1\|_{\Q_2^{*}} \|T\|^n_{\U^{dual}} \leq (1 + \varepsilon ) \|q\|_{\Q_2^{*} \; \circ \; \U^{dual}}.$$
Since $(\U^{dual},\|\; \|_{\U^{dual}})$ is right-accessible, by definition \cite[21.2]{DefFlo93} there are $N \in FIN(F)$ and $S \in \U^{dual}(M,N)$ with $\|S\|_{\U^{dual}} \leq (1+\varepsilon)\|T{|_M}\|_{\U^{dual}} \leq (1+\varepsilon)\|T\|_{\U^{dual}}$ satisfying
\begin{equation}\label{diamaccess}
\xymatrix{ M  \ar[rr]^{T{|_M}} \ar[rrd]^{S} & & {F} \\
& & {N} \ar@{^{(}->}[u]^{i_N}},
\end{equation}
Thus, since the adjoint  $S^{*}$ of $S$ belongs to $\U(N',M')$, we have
\begin{align*}
\big| \langle q|{_M}, p \rangle \big| &  = \big| \langle q_1 \circ T{|_M}, p \rangle \big|  = \big| \langle {q_1} \circ i_N \circ S , p \rangle \big| \\
& = \big| \langle {q_1} \circ i_N , p \circ S^{*} \rangle \big|  \leq  \| {q_1} \circ i_N\|_{\Q_2^{*}} \; \|p  \circ S^{*} \|_{\Q_2}  \\
& \leq  k \| q_1 \|_{\Q_2^{*}} \; \|p  \circ S^{*} \|_{\Q_1 \; \circ \; \U}  \leq  k \| q_1 \|_{\Q_2^{*}} \; \|p \|_{\Q_1} \; \|S^{*}\|^n_{\U}  \\
& \leq  k \| q_1 \|_{\Q_2^{*}} \; \|S\|^n_{\U^{dual}}  \leq  k (1 + \varepsilon)^{n}  \| q_1 \|_{\Q_2^{*}} \; \|T\|^n_{\U^{dual}} \\
& \leq  k (1 + \varepsilon)^{n+1}  \|q\|_{\Q_2^{*} \; \circ \; \U^{dual}}.
\end{align*}
This holds for every $M \in FIN(E)$ and every $p \in \Q_1(M')$ with $\| p \|_{\Q_1(M')} \leq 1$, thus $q \in \Q_1^{*}$ and $\| q \|_{\Q_1^{*}} \leq k (1+\varepsilon) \| q \|_{\Q_2^{*} \; \circ \; \U^{dual}}$. Since $\varepsilon > 0$ is arbitrary we get $\| q \|_{\Q_1^{*}} \leq k \| q \|_{\Q_2^{*} \; \circ \; \U^{dual}}$.
\end{proof}

Notice that the condition of $(\U^{dual},\|\; \|_{\U^{dual}})$ being right-accessible is fulfilled whenever $(\U,\|\; \|_{\U})$ is a maximal left-accessible Banach ideal of operators \cite[Corollary 21.3.]{DefFlo93}.

\begin{proposition}\label{adjunto de la composicion}
Let $(\Q, \| \; \|_{\Q})$ a Banach ideal of continuous
$n$-homogeneous polynomials and $(\U, \|\; \|_{\U})$ a Banach
ideal of operators. If $\U$ is maximal and accesible (or $\U$ and $\U^{dual}$ are both right-accesible), then $$(\Q \circ \U )^{*} \overset{1}{=}
\Q^{*} \circ ({\U^{dual}})^{-1}.$$
\end{proposition}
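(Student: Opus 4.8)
The plan is to prove the two isometric inclusions separately: one follows almost formally from the polynomial Cyclic Composition Lemma above, while the other needs a direct trace-duality computation, preceded by a reduction to finite dimensions in which right-accessibility of $\U$ plays the decisive role. Observe first that under either hypothesis \emph{both} $\U$ and $\U^{dual}$ are right-accessible: if $\U$ is maximal and accessible this is the remark recorded after Lemma~\ref{cyclic Composition Theorem} (namely \cite[Corollary 21.3.]{DefFlo93}), and the alternative hypothesis assumes it directly. For the inclusion $(\Q\circ\U)^{*}\subseteq\Q^{*}\circ(\U^{dual})^{-1}$ I would apply Lemma~\ref{cyclic Composition Theorem} with $\Q_1=\Q$, $\Q_2=\Q\circ\U$, operator ideal $\U$ and $k=1$ (the tautological inclusion $\Q\circ\U\subseteq\Q\circ\U$, together with $\U^{dual}$ right-accessible). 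This yields $(\Q\circ\U)^{*}\circ\U^{dual}\subseteq\Q^{*}$ with $\|\cdot\|_{\Q^{*}}\le\|\cdot\|_{(\Q\circ\U)^{*}\circ\U^{dual}}$; since $\Q^{*}\circ(\U^{dual})^{-1}$ is, by definition, the largest polynomial ideal $\mathcal{R}$ with $\mathcal{R}\circ\U^{dual}\subseteq\Q^{*}$, the inclusion follows, and the norm estimate $\|q\|_{\Q^{*}\circ(\U^{dual})^{-1}}\le\|q\|_{(\Q\circ\U)^{*}}$ is obtained by evaluating $\|q\circ T\|_{\Q^{*}}$ on $T\in\U^{dual}$ with $\|T\|_{\U^{dual}}\le 1$ and taking the supremum.

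For the reverse inclusion $\Q^{*}\circ(\U^{dual})^{-1}\subseteq(\Q\circ\U)^{*}$ (contractively) I would fix $q\in\Q^{*}\circ(\U^{dual})^{-1}(E)$ and, to estimate $\|q\|_{(\Q\circ\U)^{*}(E)}$, fix $M\in FIN(E)$ and $p\in\Q\circ\U(M')$ with $\|p\|_{\Q\circ\U(M')}\le 1$, aiming to bound $|\langle q|_M,p\rangle|$. Given $\varepsilon>0$, choose a factorization $p=\tilde{q}\circ S$ with $S\in\U(M',G)$, $\tilde{q}\in\Q(G)$ and $\|\tilde{q}\|_{\Q}\|S\|^{n}_{\U}\le 1+\varepsilon$. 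The key move is that, since $\U$ is right-accessible, one may factor $S=i_N\circ S_0$ with $N\in FIN(G)$, $i_N$ the inclusion, $S_0\in\U(M',N)$ and $\|S_0\|_{\U}\le(1+\varepsilon)\|S\|_{\U}$; then $p=q_N\circ S_0$ with $q_N:=\tilde{q}|_N\in\Q(N)$ and $\|q_N\|_{\Q}\le\|\tilde{q}\|_{\Q}$. Now $M$ and $N$ are finite dimensional, so naturality of the polynomial trace duality gives $\langle q|_M,p\rangle=\langle (q|_M)\circ S_{0}^{*},q_N\rangle$ with $S_{0}^{*}\colon N'\to M$ the adjoint of $S_0$, and $S_{0}^{*}\in\U^{dual}(N',M)$ with $\|S_{0}^{*}\|_{\U^{dual}}=\|S_0\|_{\U}$. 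Using that $\|q|_M\|_{\Q^{*}\circ(\U^{dual})^{-1}(M)}\le\|q\|_{\Q^{*}\circ(\U^{dual})^{-1}(E)}$ and the defining property of $\Q^{*}\circ(\U^{dual})^{-1}$, one gets $\|(q|_M)\circ S_{0}^{*}\|_{\Q^{*}(N')}\le\|q\|_{\Q^{*}\circ(\U^{dual})^{-1}(E)}\,\|S_0\|^{n}_{\U}$; combining this with the trace-duality inequality $|\langle (q|_M)\circ S_{0}^{*},q_N\rangle|\le\|(q|_M)\circ S_{0}^{*}\|_{\Q^{*}(N')}\,\|q_N\|_{\Q(N)}$ bounds $|\langle q|_M,p\rangle|$ by $(1+\varepsilon)^{n+1}\|q\|_{\Q^{*}\circ(\U^{dual})^{-1}(E)}$. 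Letting $\varepsilon\to 0$ and taking the supremum over $M$ and $p$ finishes this inclusion, and the two inclusions together give the isometric identity.

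I expect the main obstacle to lie in the second inclusion, and precisely in the reduction of the factorization $p=\tilde{q}\circ S$ through an arbitrary Banach space $G$ to one through a \emph{finite-dimensional} subspace $N\subseteq G$ with quantitative control of the $\U$-norm of the new middle operator: polynomial trace duality is only available in finite dimensions, so this reduction is exactly what makes the pairing manipulation legitimate, and it is exactly where right-accessibility of $\U$ is consumed (together with the routine, but necessary, verification that $S_{0}^{*}$ sits isometrically in $\U^{dual}$ because $N$ is finite dimensional). The remaining ingredients — the ``largest ideal'' characterization of $\Q^{*}\circ(\U^{dual})^{-1}$, the metric restriction to $M$, and the naturality identity $\langle q|_M,\tilde q\circ S_0\rangle=\langle (q|_M)\circ S_{0}^{*},q_N\rangle$ — are standard and I would only invoke them.
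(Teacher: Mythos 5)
Your proposal is correct and follows essentially the same route as the paper: the inclusion $(\Q\circ\U)^{*}\subseteq\Q^{*}\circ(\U^{dual})^{-1}$ via Lemma~\ref{cyclic Composition Theorem} applied to the tautological inclusion together with the ``largest ideal'' property, and the reverse inclusion by the same trace-duality computation, with right-accessibility of $\U$ used to replace the factorization $p=\tilde q\circ S$ through $G$ by one through a finite-dimensional $N$ before dualizing. Your explicit remarks that only right-accessibility of $\U$ is consumed in the second half (the middle space of the factorization has finite-dimensional domain $M'$) and that $S_0^{*}$ lies isometrically in $\U^{dual}$ are exactly the points the paper uses, stated a bit more carefully.
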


\begin{proof}
Lemma~\ref{cyclic
Composition Theorem} applied to the inclusion $\Q \circ \U \subset \Q \circ \U $ implies that $ (\Q \circ \U)^{*} \circ
\U^{dual} \subset \Q^{*}$. Therefore, $(\Q \circ \U )^{*} \subset
\Q^{*} \circ ({\U^{dual}})^{-1}$ and $\| \; \|_{\Q^{*} \circ
({\U^{dual}})^{-1}} \leq \| \; \|_{(\Q \circ \U )^{*}}$.

For the reverse inclusion we proceed similarly as in  proof of Lemma~\ref{cyclic Composition
Theorem}.
Fix $q \in \Q^{*} \circ ({\U^{dual}})^{-1}(E)$, $M \in FIN(E)$ and
$p \in \Q \circ \U(M')$ with  $\|p\|_{ \Q \circ \U(M')}\leq
1$. For $\varepsilon > 0$, we take $T \in \U(M',F)$ and $p_1 \in \Q(F)$  such
that $p=p_1 \circ T$ and $\|p_1\|_{\Q} \|T\|^n_{\U} \leq (1 + \varepsilon )$.
Since $(\U, \|\; \|_{\U})$ is accessible, there are $N \in FIN(F)$ and $S \in \U(M',N)$ with $$\|S\|_{\U^{dual}} \leq (1+\varepsilon)\|T{|_M}\|_{\U^{dual}} \leq (1+\varepsilon)\|T\|_{\U}$$ satisfying $T{|_M} = i_N \circ S$.
Note that
$S^{*} \in \U^{dual}$ and
$\|S^{*}\|_{\U^{dual}} \leq (1+\varepsilon) \|T\|_{\U}$. Thus, $q_{|_M}
\circ (S)^{*} \in \Q^{*}$ and $\|q{|_M} \circ
(S)^{*}\|_{\Q^{*}} \leq (1+\varepsilon)^n \|q\|_{\Q^{*} \circ
({\U^{dual}})^{-1}} \|T\|_{\U}^n$. Now we have:
\begin{align*}
\big| \langle q{|_M}, p \rangle \big| &  = \big| \langle q{|_M}, p_1 \circ T \rangle \big|  = \big| \langle q{|_M} , p_1 \circ i_N \circ S  \rangle \big| \\
& \leq \big| \langle q{|_M} \circ S^{*} , p_1 \circ i_N   \rangle \big|  \leq \|q{|_M} \circ S^{*} \|_{\Q^{*}} \;
\|p_1 \circ i_N\|_{\Q} \\
& \leq (1+\varepsilon)^n \|q\|_{\Q^{*} \circ ({\U^{dual}})^{-1}} \; \|p_1\|_{\Q}  \; \|T\|_{\U}^n
\\
& \leq (1 + \varepsilon)^{n+1} \|q\|_{\Q^{*} \circ ({\U^{dual}})^{-1}}.
\end{align*}
This holds for every $M \in FIN(E)$, every $p \in \Q \circ \U(M')$
with $\|p\|_{ \Q \circ \U(M')}\leq 1$ and every $\varepsilon
> 0$. As a consequence,  $q \in (\Q \circ \U )^{*}$ and $ \| q \|_{(\Q \circ \U )^{*}} \leq \| q \|_{\Q^{*} \circ
({\U^{dual}})^{-1}} $.
\end{proof}

Now we can prove Theorem \ref{descripcion}:
\begin{proof}(Theorem \ref{descripcion})
We have already mentioned that any  $p \in \Q_{/\beta\s}(E)$ extends to a polynomial $\overline p$ defined on
$\ell_\infty(B_{E'})$ with $\|\overline{p}\|_{\Q_{\beta}(\ell_\infty(B_{E'}))}=\|p\|_{\Q_{/\beta\s}(E)}$.
Therefore, $p$ belongs to $\Q_\beta \circ \mathcal{L}_\infty$ and
$$\|p\|_{\Q_\beta \circ \mathcal{L}_\infty} \leq
\|\overline{p}\|_{\Q_{\beta}(\ell_\infty(B_{E'}))} \|i\|^{n} =
\|p\|_{\Q_{/\beta\s}(E)}.$$ On the other hand, for $p \in \Q_\beta
\circ \mathcal{L}_\infty$ and $\varepsilon > 0$ we can take $ T \in
\mathcal{L}_\infty(E,F)$ and $q \in \Q_\beta(F)$ such that $p=q
\circ T$ and $\|q\|_{\Q} \|T\|_{\mathcal{L}_\infty}^n \leq (1+
\varepsilon) \|p\|_{\Q_\beta \circ \mathcal{L}_\infty}$. We
choose $R \in \mathcal{L}(E,L_\infty(\mu))$ and $S \in
\mathcal{L}(L_\infty(\mu)),F'')$ factoring $J_F \circ T: E \to
F''$ with $\|R\| \|S\| \leq (1+\varepsilon)
\|T\|_{\mathcal{L_\infty}}$. Also, since $\Q_\beta$ is a maximal polynomial ideal, its canonical extension $\overline{q}: F'' \to
\mathbb{K}$ belongs to  $\Q_\beta$ and satisfy
$\|\overline{q}\|_{\Q_\beta}=\|q\|_{\Q_\beta}$ \cite{CarGal-extending}. We have the following commutative
diagram:
\begin{equation*}
\xymatrix{&  E \ar[rr]^{p} \ar[d]^{T} \ar[dl]^{R} & &  {\mathbb{K}}\\
L_\infty(\mu) \ar[dr]^{S}  & F \ar[urr]^{q} \ar@{^{(}->}[d]^{J_F} & & \\
& {F''} \ar@/_1.2pc/[uurr]^{\overline{q}} & &}.
\end{equation*}
Since $\overline{q} \circ S \in \Q_{\beta} (L_\infty (\mu) )
\overset{1}{=} \Q_{/\beta\s} (L_\infty (\mu))$ we have
\begin{align*}
\|p\|_{\Q_{/ \beta \s}} & \leq \| \overline{q} \circ
S \|_{\Q_{/ \beta \s}} \; \|R\|^n \\
& = \| \overline{q} \circ S\|_{\Q_{\beta}} \; \|R\|^n \\
& \leq \|\overline{q}\|_{\Q_{\beta}} \; \|S\|^n \; \|R\|^n \\
& \leq (1+\varepsilon)^n \; \|q \|_{\Q_\beta} \;
\|T\|^n_{\mathcal{L}_\infty}  \\
& \leq (1+\varepsilon)^{n+1} \|p\|_{\Q_{ \beta } \circ
\mathcal{L}_\infty}.
\end{align*}
Thus, $ \Q_{/\beta\s} \overset{1}{=} \Q_\beta \circ
\mathcal{L}_\infty.$

Now we show the second identity. First notice that $\mathcal{L}_1 =\mathcal{L}_\infty^{dual}$ (this follows, for example, from Corollary 3 in \cite[ 17.8.]{DefFlo93} and the information on the table in \cite[27.2.]{DefFlo93}). Since $\mathcal{L}_\infty$ is maximal and accessible \cite[Theorem 21.5.]{DefFlo93}, an application of
Proposition~\ref{adjunto de la composicion} to the equality
$\Q_{/\beta'\s} \overset{1}{=} \Q_{\beta'} \circ
\mathcal{L}_\infty$ gives
 $\Q_{\s \alpha /} = \Q_\alpha \circ
\mathcal{L}_1^{-1}$ with $\| \; \|_{\Q_\alpha \circ
\mathcal{L}_1^{-1}} = \| \; \|_{\Q_{\s \alpha /}}$.
\end{proof}

\section{Symmetric natural tensor norms of order $n$}\label{seccion naturales}
In \cite{Groth53} Grothendieck defined natural 2-fold norms as those that can be obtained from $\pi_2$ by a finite number of the following operations: right injective hull, left injective hull, right projective hull, left projective hull and adjoint.
The aim of this section is to define and study  natural symmetric tensor norms of arbitrary order, in the spirit of Grothendieck's norms.

\begin{definition}
Let $\beta$ be an s-tensor norm of order $n$. We say that $\beta$ is a natural s-tensor norm if $\beta$ is obtained from $\pi_{n,s}$ with a finite number of the operations $\setminus  \ /$, $/ \ \setminus$, $'$.
\end{definition}

For (full) tensor norms of order 2, there are exactly four natural norms that are symmetric \cite[Section 27]{DefFlo93}. It is easy to show that the same holds for s-tensor norms of order 2 (see the  proof of Theorem~\ref{son seis}).
These are $\pi_{2,s}$, $\varepsilon_{2,s}$, $/\pi_{2,s}\s$ and $\s \varepsilon_{2,s} /$, with the same dominations as in the full case. It is important to mention that, for $n=2$, $\s \varepsilon_{n,s} /$  and $\s /\pi_{n,s}\s /$, or equivalently, $/\pi_{n,s}\s$  and  $ / \s \varepsilon_{n,s} / \s$, coincide.
However, for $n\ge 3$, we have the following.
\begin{theorem}\label{son seis}
For $n\ge 3$, there are exactly 6 different natural symmetric s-tensor norms. They can be arranged in the following way:

\begin{equation}\label{normasnaturales}
\xymatrix{ &*+[F]{ \begin{array}{c}
                  \;\;\;\;\;\;   \pi_{n,s} \;\;\;\;\;\; \\
\hline
\P_I
                  \end{array}}\ar[d]
   & \\
&*+[F]{
\begin{array}{c}  \;  \; \s / \pi_{n,s} \s / \; \;\\ \hline \P_I  \circ (\mathcal{L}_1)^{-1} \circ \mathcal{L}_\infty \end{array}}
\ar[ld]\ar[rd] & \\
*+[F]{\begin{array}{c} \; \; / \pi_{n,s} \s \; \; \\ \hline P_I \circ (\mathcal{L}_1)^{-1} \end{array}} \ar[rd] &  & *+[F]{\begin{array}{c}  \; \; \s \varepsilon_{n,s} / \; \; \\ \hline \P_e \end{array}} \ar[ld] \\
&*+[F]{\begin{array}{c}  \; \; / \s \varepsilon_{n,s} / \s \;  \; \\ \hline  \P_e \circ (\mathcal{L}_1)^{-1} \end{array}}\ar[d] & \\
&*+[F]{\begin{array}{c}  \; \; \varepsilon_{n,s} \;  \; \\ \hline \P \end{array}}& \\
}
\end{equation}
where $\beta \to \gamma$ means that $\beta$ dominates $\gamma$. There are no other dominations than those showed in the scheme. Below each tensor norm we find its associated maximal polynomial ideal. %
\end{theorem}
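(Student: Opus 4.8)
\emph{Plan.} I would organize the proof in three stages: (1) prove that the six displayed norms form a set closed under the three defining operations $\s\,\cdot\,/$, $/\,\cdot\,\s$, $'$, which already gives that there are at most six natural s-tensor norms; (2) check that the arrows in \eqref{normasnaturales} hold and that there are no others; (3) read the associated polynomial ideals off Theorem~\ref{descripcion}.

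\emph{Stage 1 (closure, hence ``at most six'').} Set
$$S=\big\{\pi_{n,s},\ \s/\pi_{n,s}\s/,\ /\pi_{n,s}\s,\ \s\varepsilon_{n,s}/,\ /\s\varepsilon_{n,s}/\s,\ \varepsilon_{n,s}\big\}.$$
I would use only that $\pi_{n,s}$ is projective and $\varepsilon_{n,s}$ injective, that $\s\,\cdot\,/$ and $/\,\cdot\,\s$ are monotone and idempotent and produce projective resp.\ injective norms, and the duality relations $(\s\beta/)'=/\beta'\s$, $(/\beta\s)'=\s\beta'/$ together with $\pi_{n,s}'=\varepsilon_{n,s}$. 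The last three identities show at once that $S$ is closed under $'$. For the hulls, every $\s\beta/$ or $/\beta\s$ with $\beta\in S$ lies in $S$ by idempotency or by the very notation, the only exceptions being $/\s/\pi_{n,s}\s/\s$ and its dual $\s/\s\varepsilon_{n,s}/\s/$; but from $/\pi_{n,s}\s\le\s/\pi_{n,s}\s/\le\pi_{n,s}$ and monotonicity of $/\,\cdot\,\s$ (which fixes $/\pi_{n,s}\s$ and sends $\pi_{n,s}$ to $/\pi_{n,s}\s$) one gets $/\s/\pi_{n,s}\s/\s=/\pi_{n,s}\s$, and dually $\s/\s\varepsilon_{n,s}/\s/=\s\varepsilon_{n,s}/$. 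Thus $S$ is closed under all three operations; since every natural s-tensor norm arises from $\pi_{n,s}\in S$ by finitely many of them, and each norm in $S$ is plainly reachable, the natural s-tensor norms are exactly the elements of $S$. (The same bookkeeping yields the order-$2$ remark: there the classical coincidences $\s/\pi_{2,s}\s/=\s\varepsilon_{2,s}/$ and $/\pi_{2,s}\s=/\s\varepsilon_{2,s}/\s$ collapse $S$ to four elements.)

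\emph{Stage 2 (the scheme).} Every arrow in \eqref{normasnaturales} is immediate from $\varepsilon_{n,s}\le\beta\le\pi_{n,s}$ and monotonicity of the hulls (e.g.\ $\s/\pi_{n,s}\s/\ge\s\varepsilon_{n,s}/$ because $/\pi_{n,s}\s\ge\varepsilon_{n,s}$). For ``no other dominations'' it suffices, by the shape of the scheme, to show the six norms are pairwise distinct and that $/\pi_{n,s}\s$ and $\s\varepsilon_{n,s}/$ are incomparable. The two decisive strictnesses $\s/\pi_{n,s}\s/\neq\s\varepsilon_{n,s}/$ and $/\pi_{n,s}\s\neq/\s\varepsilon_{n,s}/\s$ are Theorem~\ref{no son equivalentes}; granted these, $/\pi_{n,s}\s\not\le\s\varepsilon_{n,s}/$ is forced (applying $/\,\cdot\,\s$ to such an inequality would give $/\pi_{n,s}\s=/\s\varepsilon_{n,s}/\s$, against Theorem~\ref{no son equivalentes}). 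The remaining work concentrates in the single inequality $\s\varepsilon_{n,s}/\not\le/\pi_{n,s}\s$ — equivalently, that $/\pi_{n,s}\s$ is not projective, equivalently $\P_I\circ(\mathcal L_1)^{-1}\not\subset\P_e$ — from which the top‑and‑bottom separations ($\pi_{n,s}\neq\s/\pi_{n,s}\s/$, $\s/\pi_{n,s}\s/\neq/\pi_{n,s}\s$, $\s\varepsilon_{n,s}/\neq/\s\varepsilon_{n,s}/\s$, $\varepsilon_{n,s}\neq/\s\varepsilon_{n,s}/\s$) then follow by chasing equalities up and down the scheme and dualizing. I would establish $\s\varepsilon_{n,s}/\not\le/\pi_{n,s}\s$ by a direct computation on a well‑chosen finite‑dimensional space (finite‑dimensional Hilbert spaces, or $\ell_\infty^m$, being the natural test objects), producing a symmetric tensor on which $\s\varepsilon_{n,s}/$ strictly exceeds $/\pi_{n,s}\s$, or equivalently a polynomial separating the ideals $\P_e$ and $\P_I\circ(\mathcal L_1)^{-1}$. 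This incomparability of the two middle norms is, I expect, the real obstacle, and it is exactly the place where the phenomenon recorded in Theorem~\ref{no son equivalentes} (the failure of the two‑fold coincidence of $w_2'$ in higher orders) is indispensable.

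\emph{Stage 3 (polynomial ideals).} For a maximal s-tensor norm $\mu$, the associated maximal polynomial ideal is $\Q_{\mu'}$, i.e.\ the unique maximal ideal with $\Q_{\mu'}(M)\overset 1=\otimes^{n,s}_\mu M$ on $FIN$. Feeding $\pi_{n,s}'=\varepsilon_{n,s}$, the standard identifications $\Q_{\varepsilon_{n,s}}=\P_I$ (integral polynomials), $\Q_{/\pi_{n,s}\s}=\P_e$ (extendible polynomials), $\Q_{\pi_{n,s}}=\P$, the duality relations, and Theorem~\ref{descripcion} ($\Q_{/\beta\s}=\Q_\beta\circ\mathcal L_\infty$ and $\Q_{\s\beta/}=\Q_\beta\circ(\mathcal L_1)^{-1}$) into one another fills every box in one or two lines; for instance the ideal of $\s/\pi_{n,s}\s/$ is $\Q_{(\s/\pi_{n,s}\s/)'}=\Q_{/\s\varepsilon_{n,s}/\s}=\Q_{\s\varepsilon_{n,s}/}\circ\mathcal L_\infty=\big(\Q_{\varepsilon_{n,s}}\circ(\mathcal L_1)^{-1}\big)\circ\mathcal L_\infty=\P_I\circ(\mathcal L_1)^{-1}\circ\mathcal L_\infty$, and the other five entries are obtained identically.
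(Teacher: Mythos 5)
Your Stages 1 and 3 are sound and essentially what the paper does: the squeeze $/\pi_{n,s}\s\le/\s/\pi_{n,s}\s/\s\le/\pi_{n,s}\s$ (from monotonicity and idempotency of the injective hull) is a clean substitute for the paper's lemma on iterated hulls, and the ideal computations are exactly the intended use of Theorem~\ref{descripcion}. The gaps are in Stage 2, and they sit precisely at the points you wave at. First, the inequality $\s\varepsilon_{n,s}/\not\le C\,/\pi_{n,s}\s$ is left as ``a direct computation on a well-chosen finite-dimensional space''; besides not being carried out, a single finite-dimensional example cannot refute a domination (which allows an arbitrary constant $C$) --- you would need the constants to blow up along a family. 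You have also misplaced the difficulty: this inequality is not the new $n\ge3$ phenomenon at all. It already holds for $n=2$ (it is $w_2'\not\le C w_2$ from the classical table), and the paper obtains the case $n\ge3$ for free by transferring to full tensor norms via $\Phi$ (Lemma~\ref{Propo phi}) and lowering the degree with $\underline{\alpha}$ (Lemma~\ref{bajarelgrado}), exactly as in the proof of Lemma~\ref{inyectiva-proyectiva}. The genuinely $n\ge 3$ ingredient is the one you correctly delegate to Theorem~\ref{no son equivalentes} (equivalently, the existence of extendible non-integral polynomials); that citation is not circular, since the proof of that theorem does not use the present one, although the paper only needs the weaker non-equivalence on $\otimes^{n,s}\ell_1$.

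Second, your claim that $\pi_{n,s}\ne\s/\pi_{n,s}\s/$ and $\varepsilon_{n,s}\ne/\s\varepsilon_{n,s}/\s$ ``follow by chasing equalities up and down the scheme and dualizing'' from the single incomparability does not hold up. Applying hulls or duals to a hypothetical equivalence $\pi_{n,s}\sim\s/\pi_{n,s}\s/$ only returns tautologies (the identity $/\s/\pi_{n,s}\s/\s=/\pi_{n,s}\s$ destroys the information), and such an equivalence is formally consistent with every separation and incomparability you have established among the other five norms; note also that the natural test spaces fail here, since on $\mathcal{L}_1$ and on $\mathcal{L}_\infty$ spaces $\pi_{n,s}$ and $\s/\pi_{n,s}\s/$ genuinely are equivalent. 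This top separation (and, dually, the bottom one) needs its own argument --- again the reduction to $n=2$ via Lemmas~\ref{Propo phi} and~\ref{bajarelgrado}, where $\s/\pi_{2,s}\s/\sim w_2'\not\sim\pi_2$. So Stage 2 as written does not close.
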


Before we prove the Theorem, we need some previous results and definitions.

\begin{lemma}\label{lemma1}
Let  $\beta$ be an s-tensor norm of order $n$. Then $ \s / \s / \beta \s / \s / = \s / \beta \s /$
and $ / \s / \s \beta / \s / \s = / \s \beta / \s.$
\end{lemma}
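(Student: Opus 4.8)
The plan is to establish both identities by invoking the idempotency of the injective and projective hull operations, which follows from the definitions given in Section~\ref{adjoint of the composition}. Recall that $\s\,\cdot\,/$ produces the smallest projective s-tensor norm above the given one, and $/\,\cdot\,\s$ the largest injective one below it; consequently $\s\,\s\beta\,/\,/ = \s\beta\,/$ and $/\,/\beta\,\s\,\s = /\beta\,\s$ for any s-tensor norm $\beta$. The key additional fact needed is that applying an injective hull and then a projective hull, and iterating, eventually stabilizes after one ``round trip'' in the pattern $\s/\,\cdot\,\s/$.

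For the first identity, write $\gamma := \s/\beta\s/$. Clearly $\s/\gamma\s/ \geq \gamma$ since $\s\,\cdot\,/$ and $/\,\cdot\,\s$ are both hull operations (the projective hull only increases a norm; but the injective hull decreases it — so one must argue more carefully). The clean route is: since $\gamma$ is already projective, $\s\gamma/ = \gamma$, so $\s/\gamma\s/ = \s/\,\gamma\,\s/ = \s\big(/\gamma\s\big)/$. Now $/\gamma\s = /\,\s/\beta\s/\,\s$. Here one uses that $/\beta\s \leq /\s/\beta\s/\s \leq /\s\,\s/\beta\s/\,\s = /\,\s/\beta\s/\,\s$, combined with the fact that $\s/\beta\s/$ lies between $/\beta\s$ and the projective hull, together with monotonicity of $/\,\cdot\,\s$. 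Taking projective hulls of the sandwiched chain $/\beta\s \leq /\gamma\s \leq \s/\beta\s/$ and using $\s\,\s/\beta\s/\,/ = \s/\beta\s/$ (idempotency of the projective hull) shows $\s(/\gamma\s)/ = \s/\beta\s/$, which is exactly $\gamma$. The second identity is entirely dual: set $\delta := /\s\beta/\s$, use that $\delta$ is injective so $/\delta\s = \delta$, hence $/\s\delta/\s = /\,\s\delta/\,\s = /\big(\s\delta/\big)\s$, and sandwich $\s\beta/ \leq \s\delta/ \leq /\s\beta/\s$ between the relevant hulls, applying idempotency of the injective hull $/\,/\s\beta/\s\,\s = /\s\beta/\s$ to collapse $/(\s\delta/)\s$ back to $\delta$.

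The main obstacle, and the point requiring care, is the bookkeeping of the inequalities: the injective hull is order-decreasing while the projective hull is order-increasing, so one cannot simply chain ``$\geq$'' monotonically through an alternating composition. The trick throughout is to exploit that after the first application of $\s\,\cdot\,/$ the norm is \emph{already projective}, so a further $\s\,\cdot\,/$ does nothing, and dually for injectivity; this reduces each claimed identity to a single genuine sandwich inequality plus one idempotency. I would also double-check that the hull operations are well-defined on all s-tensor norms (not just finitely generated ones) as used here — but since the paper restricts to finitely generated norms and the hulls preserve this class (via the quotient/embedding descriptions with $\ell_1$ and $\ell_\infty$), this is automatic. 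No delicate estimate or new construction is needed; it is purely a formal manipulation of hull operations, and the proof should be short.
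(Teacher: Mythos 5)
Your argument is correct and essentially complete for the first identity, but it takes a genuinely different (and slightly longer) route than the paper. The paper proves only the ``$\le$'' inequality in each identity --- exactly your observation that $/ \gamma \s \le \gamma$ together with the projectiveness of $\gamma = \s / \beta \s /$ forces $\s / \gamma \s / \le \gamma$, and the analogous step for the second identity --- and then obtains both reverse inequalities at once by dualizing, using $(\s \beta /)' = / \beta' \s$ and $(/ \beta \s)' = \s \beta' /$. You instead prove both directions directly by a squeeze: from $/ \beta \s \le / \gamma \s \le \gamma$, monotonicity of the projective hull and its idempotency collapse both ends to $\gamma$. This is perfectly valid and more self-contained, since it never invokes the duality formulas for hulls; the cost is only some extra bookkeeping, which is precisely where your one slip occurs. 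In the second identity your sandwich is written with the endpoints swapped: since $\delta := / \s \beta / \s \le \s \beta /$ and the projective hull is monotone and idempotent, the correct chain is $\delta \le \s \delta / \le \s \beta /$, and applying the (monotone, idempotent) injective hull to all three terms gives $\delta = / \delta \s \le / (\s \delta /) \s \le / (\s \beta /) \s = \delta$. As written, your chain $\s \beta / \le \s \delta / \le / \s \beta / \s$ would force $\s \beta / \le / \s \beta / \s$, which fails in general because the injective hull decreases norms. The intended dual argument is fine once the directions are corrected, so this is a transcription error rather than a gap in the idea.
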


\begin{proof} It is enough to show the ``$\le$'' inequalities in both equations, since the reverse ones follow by duality.
Since $ / \s / \beta \s / \s \leq \s / \beta \s /$ and  $\s / \beta \s /$ is projective, we can conclude that
$ \setminus / \setminus / \beta \setminus / \setminus / \leq  \setminus / \beta \setminus /.$
For the second inequality, we have $ / \s \beta / \s \leq \s \beta / $ and, by the projectiveness of $\s \beta /$,  we obtain
$\s / \s \beta / \s /\leq \s \alpha /.$ So the corresponding injective hulls satisfy the same inequality, as desired.
\end{proof}

Let $\alpha$ be a full tensor norm of order $n$. We will denote by $\underline{\alpha}$ the full tensor norm of order $n-1$ given by $$\underline{\alpha}(z, \otimes_{i=1}^{n-1} E_i) := \alpha(z \otimes 1,  E_1 \otimes \dots \otimes  E_{n-1} \otimes \mathbb{C}),$$ where  $z \otimes 1 := \sum_{i=1}^m x_1^i \otimes \dots x_n^i \otimes 1$, for $z = \sum_{i=1}^m x_1^i \otimes \dots x_n^i$ (this definition can be seen as dual to some ideas on  \cite{BotBraJunPel06} and \cite{CarDimMurN}).

\begin{lemma}\label{bajarelgrado}
For any tensor norm $\alpha$, we have: $ \underline{(/ \alpha \s)} = / \underline{\alpha} \s $ and $ \underline{(\s \alpha /)} = \s \underline{\alpha} / $. Also, if $\alpha$ and $\gamma$ are full tensor norms and there exists $C>0$ such that $\alpha \le C \gamma$, then $\underline{\alpha}\le C\underline{\gamma}$.
\end{lemma}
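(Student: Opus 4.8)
The plan is to prove all three assertions by unwinding the definition of $\underline{\alpha}$ together with the ``add/remove a one-dimensional factor'' description of the injective and projective hulls. For the last assertion — that $\alpha \le C\gamma$ implies $\underline{\alpha} \le C\underline{\gamma}$ — this is immediate: given $z \in \otimes_{i=1}^{n-1}E_i$, apply the hypothesis to the element $z\otimes 1 \in E_1 \otimes \cdots \otimes E_{n-1}\otimes \mathbb{C}$ to get $\underline{\alpha}(z) = \alpha(z\otimes 1) \le C\gamma(z\otimes 1) = C\underline{\gamma}(z)$, so this should be dispatched first in one line.

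For the two hull identities I would treat the injective case $\underline{(/\alpha\s)} = /\underline{\alpha}\s$ first and then obtain the projective one by duality, using the relations $(/\alpha\s)' = \s\alpha'/$ and $(\s\alpha/)' = /\alpha'\s$ recalled in the excerpt, together with the fact (which needs a short verification) that $(\underline{\alpha})' = \underline{(\alpha')}$ — i.e. dualization commutes with adding back the trivial factor, since $\mathbb{C}' = \mathbb{C}$ isometrically and the trace duality on $\otimes_{i=1}^{n-1}M_i \otimes \mathbb{C}$ matches that on $\otimes_{i=1}^{n-1}M_i$. Granting this, $\underline{(\s\alpha/)} = \underline{((/\alpha'\s)')} = (\underline{(/\alpha'\s)})' = (/\underline{\alpha'}\s)' = (/(\underline\alpha)'\s)' = \s\underline\alpha/$, which reduces the projective statement to the injective one.

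For the injective identity itself, the natural route is to use the characterization $/\alpha\s$-norm $=$ the norm coming from the canonical embeddings $\big(\otimes_{i=1}^n E_i, /\alpha\s\big) \overset 1\hookrightarrow \big(\otimes_{i=1}^n \ell_\infty(B_{E_i'}), \alpha\big)$. Concretely, for $z \in \otimes_{i=1}^{n-1}E_i$ one has $\underline{(/\alpha\s)}(z) = /\alpha\s(z\otimes 1)$ computed in $E_1\otimes\cdots\otimes E_{n-1}\otimes\mathbb{C}$; since $\mathbb{C}$ is already an $\mathcal{L}_{\infty,\lambda}$ space for every $\lambda>1$ (indeed $\mathbb{C} = \ell_\infty(B_{\mathbb{C}'})$), the last embedding does not change the $\mathbb{C}$-slot, so $/\alpha\s(z\otimes 1) = \alpha\big(J(z)\otimes 1\big)$ where $J = \otimes_{i=1}^{n-1} I_{E_i}$ maps into $\otimes_{i=1}^{n-1}\ell_\infty(B_{E_i'})$; but by definition of $\underline{\alpha}$ the right side equals $\underline{\alpha}\big(J(z)\big)$, and running the same embedding characterization for $/\underline\alpha\s$ gives exactly $/\underline\alpha\s(z) = \underline\alpha\big(J(z)\big)$. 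Hence $\underline{(/\alpha\s)}(z) = /\underline\alpha\s(z)$. I expect the main obstacle to be the bookkeeping needed to justify that the one-dimensional factor is genuinely inert under these operations — that is, proving cleanly that for a finitely generated tensor norm $\alpha$ one has $\alpha(w\otimes 1;\, G_1\otimes\cdots\otimes G_{n-1}\otimes\mathbb{C}) = \underline{\alpha}(w;\, G_1\otimes\cdots\otimes G_{n-1})$ is compatible with taking injective/projective hulls, which may require invoking finite generation and the metric mapping property for the rank-one maps $\mathbb{C}\to\mathbb{C}$ rather than just quoting the hull formulas. Once that inertness is isolated as a sublemma, the rest is formal.
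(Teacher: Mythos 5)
Your proposal is correct, and the injective-hull identity is handled exactly as in the paper: embed each $E_i$ into $\ell_\infty(B_{E_i'})$, observe that the one-dimensional factor $\mathbb{C}$ is inert under the injective hull, and unwind the definition of $\underline{\alpha}$; the third assertion is likewise dispatched in one line. Where you genuinely diverge is the projective identity. The paper proves it directly: it writes $\s \underline{\alpha} / (z)$ as an infimum of $\underline{\alpha}(t)=\alpha(t\otimes 1)$ over liftings $t$ of $z$ through the canonical quotients $\ell_1(B_{E_i})\twoheadrightarrow E_i$, and matches this with the infimum defining $\s \alpha /(z\otimes 1)$, keeping $id_{\mathbb{C}}$ in the last slot (legitimate because $\mathbb{C}$ is an $\mathcal{L}_{1,\lambda}$ space for every $\lambda>1$). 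You instead deduce the projective case from the injective one by duality, via $(\s \alpha /)'=/\alpha'\s$ together with the commutation $(\underline{\alpha})'=\underline{(\alpha')}$. That commutation does hold, and the key point you should state explicitly is that tensoring with the one-dimensional space $\mathbb{C}$ is a linear isomorphism $\otimes_{i=1}^{n-1}M_i\cong \otimes_{i=1}^{n-1}M_i\otimes\mathbb{C}$, so no subspace-versus-quotient discrepancy arises when dualizing and the trace pairings match; one should also note that $\underline{(\alpha')}$ is finitely generated so the identity on $FIN$ extends to $BAN$. Your route buys a cleaner projective step (no need to argue that liftings can be taken of the form $t\otimes 1$), at the cost of this extra commutation lemma. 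One small correction: $\mathbb{C}\neq\ell_\infty(B_{\mathbb{C}'})$ literally, since the index set $B_{\mathbb{C}'}$ is the whole closed unit disk; what the inertness argument actually needs is that $I_{\mathbb{C}}$ is an isometry admitting a norm-one left inverse (evaluation at $1\in B_{\mathbb{C}'}$), which together with the metric mapping property gives the claim.
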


\begin{proof}
Let $z \in  \otimes_{i=1}^n E_i$.
For the first statement, if $I_i : E_i \to \ell_{\infty}(B_{{E'_i}})$ are the canonical embeddings, we have
$$
\begin{aligned}
/ \underline{\alpha} \s \big ( z, E_1 \otimes \dots \otimes E_{n-1} \big) & =  \underline{\alpha} \big( \otimes_{i=1}^n I_i (z), \ell_\infty(B_{E_1'}) \otimes \dots \otimes \ell_\infty(B_{E_{n-1}'}) \big) \\
& = \alpha \big( \otimes_{i=1}^n I_i (z) \otimes 1, \ell_\infty(B_{E_1'}) \otimes \dots \otimes \ell_\infty(B_{E_{n-1}'}) \otimes \mathbb{C} \big) \\
& = / \alpha \s \big( z \otimes 1, E_1 \otimes \dots \otimes E_{n-1} \otimes \mathbb{C} \big) \\
& = \underline{(/ \alpha \s)} \big( z, E_1 \otimes \dots \otimes E_{n-1} \big).
\end{aligned}
$$
For the second statement, if $Q_i : \ell_1(B(E_i)) \twoheadrightarrow E_i$ are the canonical quotient mappings, we obtain
$$
\begin{aligned}
\s \underline{\alpha} / \big( z, E_1 \otimes \dots E_{n-1} \big) & = \inf_{\{t \; / \; \otimes_{i=1}^{n-1} P_i (t)= z \}}  \underline{\alpha} \big( t, \ell_1(B_{E_1}) \otimes \dots \otimes \ell_1(B_{E_{n-1}}) \big) \\
& = \inf_{\{t \; / \; \otimes_{i=1}^{n-1} P_i (t)= z \}}   \alpha \big( t \otimes 1, \ell_1(B_{E_1}) \otimes \dots \otimes \ell_1(B_{E_n}) \otimes \mathbb{C} \big) \\
& = \inf_{ \{t \; / \; (P_1 \otimes \dots P_{n-1} \otimes id_{\mathbb{C}}) (t \otimes 1) \; = \; z \otimes 1 \}}  \alpha \big( t \otimes 1, \ell_1(B_{E_1}) \otimes \dots \otimes \ell_1(B_{E_{n-1}}) \otimes \mathbb{C} \big) \\
& = \s \alpha / \big(z \otimes 1, E_1 \otimes \dots \otimes E_{n-1} \otimes \mathbb{C} \big) \\
& = \underline{(\s \alpha /)} \big(z , E_1 \otimes \dots \otimes E_{n-1}  \big).
\end{aligned}
$$
The third statement is immediate.
\end{proof}

Floret in \cite{Flo01(extension)} showed that for every s-tensor norm $\beta$ of order $n$ there exist a full tensor norm $\Phi(\beta)$ of order $n$ which is equivalent to $\beta$ when restricted on symmetric tensor products (i.e. there is a constant $d_n$ depending only on $n$ such that $d_n^{-1} \Phi(\beta)|_s  \leq \beta \leq d_n \Phi(\beta)|_s$ in $\otimes^{n,s}E$ for every Banach space $E$). As a consequence, a large part of the isomorphic theory of norms on symmetric tensor products can be deduced from the theory of ``full'' tensor norms, which is usually easier to handle and has been more studied.

\begin{lemma} \label{Propo phi}
Let  $\beta$ be an s-tensor norm of order $n$. Then $\Phi(/ \beta \setminus) $ and $ /\Phi(\beta)\setminus$ are equivalent s-tensor norms. Also,  $\Phi(\setminus \beta /)$ and $ \setminus \Phi(\beta)/$ are equivalent s-tensor norms.
\end{lemma}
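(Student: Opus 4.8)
The plan is to unwind Floret's construction of $\Phi$. Recall from \cite{Flo01(extension)} that, up to a normalization constant depending only on $n$, $\Phi(\beta)$ is given on an $n$-tuple $(E_1,\dots,E_n)$ by
$$\Phi(\beta)\big(z;\otimes_{i=1}^n E_i\big)=\beta\big(\iota_E(z);\otimes^{n,s}E\big),\qquad E:=E_1\oplus_\infty\cdots\oplus_\infty E_n,$$
where $\iota_E\colon\otimes_{i=1}^n E_i\to\otimes^{n,s}E$ is the natural symmetrization $x_1\otimes\cdots\otimes x_n\mapsto(\kappa_1x_1)\vee\cdots\vee(\kappa_n x_n)$ induced by the canonical inclusions $\kappa_i\colon E_i\hookrightarrow E$. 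Two elementary features will be used repeatedly: $\Phi$ is order preserving and respects equivalence (it merely evaluates the s-tensor norm at the fixed vector $\iota_E(z)$); and $\iota$ is natural, so that for operators $S_i\colon E_i\to G_i$, writing $G:=\oplus_\infty G_i$ and $S:=\oplus_\infty S_i$, one has $\otimes^{n,s}S\circ\iota_E=\iota_G\circ(\otimes_{i=1}^nS_i)$, and moreover $S$ is a metric injection (resp.\ metric surjection) as soon as all the $S_i$ are. (If one prefers Floret's $\ell_1$-sum version of $\Phi$, the roles of ``injective/$\mathcal L_\infty$'' and ``projective/$\mathcal L_1$'' below are interchanged.)

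For the first identity I would show $/\Phi(\beta)\setminus\ \sim\ \Phi(/\beta\setminus)$ directly. Fix $(E_1,\dots,E_n)$ and $z\in\otimes_{i=1}^nE_i$. By the injective-hull formula for full tensor norms quoted in Section~\ref{adjoint of the composition}, $/\Phi(\beta)\setminus(z)=\Phi(\beta)\big(\otimes_iI_{E_i}(z);\otimes_i\ell_\infty(B_{E_i'})\big)$, which by the construction of $\Phi$ equals $\beta\big(\iota_F(\otimes_iI_{E_i}(z));\otimes^{n,s}F\big)$ with $F:=\oplus_\infty\ell_\infty(B_{E_i'})$. Now $F$ is isometric to $\ell_\infty\big(\bigsqcup_iB_{E_i'}\big)$, hence is an $\mathcal L_{\infty,\lambda}$ space for every $\lambda>1$; by the coincidence statement in Section~\ref{adjoint of the composition}, $\beta$ and $/\beta\setminus$ agree on $\otimes^{n,s}F$. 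By naturality of $\iota$ applied to $S_i=I_{E_i}$ one has $\iota_F(\otimes_iI_{E_i}(z))=\otimes^{n,s}J(\iota_E(z))$, where $E:=\oplus_\infty E_i$ and $J:=\oplus_\infty I_{E_i}\colon E\hookrightarrow F$ is a metric injection; since $/\beta\setminus$ is injective, $\otimes^{n,s}J$ is an isometry onto its range for $/\beta\setminus$. Chaining these,
$$/\Phi(\beta)\setminus(z)=/\beta\setminus\big(\otimes^{n,s}J(\iota_E(z));\otimes^{n,s}F\big)=/\beta\setminus\big(\iota_E(z);\otimes^{n,s}E\big)=\Phi(/\beta\setminus)(z),$$
all equalities holding modulo the common normalization constant of $\Phi$, which gives the desired equivalence. (If the precise form of $\Phi$ only yields ``$\le$'' in the first step, the reverse inequality follows from the soft observation that $\Phi(/\beta\setminus)\le\Phi(\beta)$ is itself injective — again by naturality of $\iota$ and injectivity of $/\beta\setminus$ — hence is dominated by the largest injective full norm below $\Phi(\beta)$, namely $/\Phi(\beta)\setminus$.)

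For the second identity I would reduce to the first one by duality rather than redo the computation, since the $\ell_\infty$-sum hard-wired into $\Phi$ no longer cooperates with a projective hull. Using $(/\alpha\setminus)'=\setminus\alpha'/$ and $\beta''=\beta$ (finite generation) we have $\setminus\beta/=\big(/\beta'\setminus\big)'$. Granting that $\Phi$ is compatible with duality up to a constant depending only on $n$, i.e.\ $\Phi(\delta')\sim\Phi(\delta)'$ for every s-tensor norm $\delta$ (this follows from the construction, the finitely many summands making the $\ell_1$- and $\ell_\infty$-sums uniformly isomorphic, or can be quoted from \cite{Flo01(extension)}), we obtain
$$\Phi(\setminus\beta/)=\Phi\big((/\beta'\setminus)'\big)\sim\Phi(/\beta'\setminus)'\sim\big(/\Phi(\beta')\setminus\big)'=\setminus\Phi(\beta')'/\sim\setminus\Phi(\beta)/,$$
where the second equivalence is $\Phi$–duality, the third is the first identity applied to $\beta'$ (dualized), the last equality is again $(/\alpha\setminus)'=\setminus\alpha'/$, and the final equivalence uses $\Phi(\beta')'\sim\Phi(\beta)''=\Phi(\beta)$ together with monotonicity of $\setminus\,\cdot\,/$.

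The main obstacle is precisely this asymmetry: the definition of $\Phi$ commits to one $\ell_p$-direct sum, so one of the two identities drops out of the $\mathcal L_\infty$- (resp.\ $\mathcal L_1$-) coincidence for $\beta$ and its hull, while the other is not directly accessible and must be obtained either via the duality behaviour of $\Phi$ (the route above) or by proving that $\Phi$ sends projective s-tensor norms to projective full norms — the latter being more delicate because $\iota_E$ maps $\otimes_{i=1}^nE_i$ only onto a proper subspace of $\otimes^{n,s}E$, so one cannot freely lift preimages under a metric surjection. I would therefore take the duality route, isolating ``$\Phi(\delta')\sim\Phi(\delta)'$'' (or citing it) as the one auxiliary fact needed beyond the black-box property $d_n^{-1}\Phi(\beta)|_s\le\beta\le d_n\Phi(\beta)|_s$.
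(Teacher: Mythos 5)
Your proof is correct and follows essentially the same route as the paper: for the first identity you embed into $\ell_\infty$-spaces, use that $\beta$ and $/\beta\setminus$ coincide on $\mathcal L_\infty$-spaces, and pull back by injectivity of $/\beta\setminus$; for the second you use exactly the duality chain $\Phi(\setminus\beta/)=\Phi((/\beta'\setminus)')\sim\Phi(/\beta'\setminus)'\sim(/\Phi(\beta')\setminus)'=\setminus\Phi(\beta')'/\sim\setminus\Phi(\beta)/$ that the paper quotes from Floret. The only cosmetic difference is that the paper works with Floret's actual $\oplus_2$-sum normalization of $\Phi$ and inserts an extra equivalence $\oplus_2\asymp\oplus_\infty$ (legitimate since there are finitely many summands), which your hedge about the precise $\ell_p$-sum already anticipates.
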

\begin{proof}For simplicity, we consider the case $n=2$, the proof of the general case being completely analogous. The definition of the injective associate gives
$$ E_1 \otimes_{/\Phi(\beta)\setminus} E_2 \overset 1 \hookrightarrow \ell_\infty(B_{E_1'}) \otimes_{\Phi(\beta)} \ell_\infty(B_{E_2'}).$$
Take $x_1, \dots, x_r\in E_1$ and  $y_1, \dots, y_r\in E_2$ and
let $I_i: E_i \to \ell_\infty(B_{E_i'})$ be the canonical embeddings (\ref{canonical embedding}).
Following the notation in~\cite{Flo01(extension)}, we have:
$$
\begin{aligned}
 / \Phi(\beta) \setminus & \big( \sum_{j=1}^r x_j \otimes y_j \big) = \Phi(\beta) \big(\sum_{j=1}^r I_1({x_j}) \otimes I_2(y_j), \ell_\infty(B_{E_1'}) \otimes \ell_\infty(B_{E_2'}) \big) \\
& = \sqrt{2} K_2(\beta)^{-1} \beta \big( \sum_{j=1}^r (I_1(x_j),0) \vee (0, I_2(y_j)), \otimes^{2,s} \{ \ell_\infty(B_{E_1'}) \oplus_2 \ell_\infty(B_{E_2'}) \}\big) \\
&\asymp \sqrt{2} K_2(\beta)^{-1} \beta \big( \sum_{j=1}^r (I_1(x_j),0) \vee (0, I_2(y_j)), \otimes^{2,s} \{ \ell_\infty(B_{E_1'}) \oplus_\infty \ell_\infty(B_{E_2'})\} \big) \\
&= \sqrt{2} K_2(\beta)^{-1} / \beta \setminus \big( \sum_{j=1}^r (I_1(x_j),0) \vee (0, I_2(y_j)), \otimes^{2,s} \{ \ell_\infty(B_{E_1'}) \oplus_\infty \ell_\infty(B_{E_2'}) \} \big) \\
&\asymp \sqrt{2} K_2(\beta)^{-1} / \beta \setminus \big( \sum_{j=1}^r (I_1(x_j),0) \vee (0, I_2(y_j), \otimes^{2,s} \{ \ell_\infty(B_{E_1'}) \oplus_2 \ell_\infty(B_{E_2'}) \} \big) \\
& =\sqrt{2} K_2(\beta)^{-1} / \beta \setminus \big( \sum_{j=1}^r (x_j,0) \vee (0, y_j), \otimes^{2,s} \{ E_1 \oplus_2 E_2 \}) \\
& = \Phi(/ \beta \setminus) ( \sum_{j=1}^r x_j \otimes y_j),
\end{aligned}
$$
where $\asymp$ means that the two expressions are equivalent with universal constants.
The second equivalence follows from the first one by duality, since  by~\cite[Theorem 2.3.(8)]{Flo01(extension)} we have $ \Phi(\setminus \beta /) =  \Phi((/ \beta' \setminus)') \sim \Phi(/ \beta' \setminus)' \sim /\Phi( \beta' )\setminus' =  \setminus \Phi(\beta')' / \sim \setminus \Phi(\beta) / $.
\end{proof}

\begin{lemma}\label{inyectiva-proyectiva}
No injective norm $\beta$ can be equivalent to a projective norm $\delta$.
\end{lemma}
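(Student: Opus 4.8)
The plan is to argue by contradiction, reducing the statement — via Floret's functor $\Phi$ and the order--lowering operation $\underline{(\,\cdot\,)}$ — to the classical fact that the $2$--fold tensor norms $w_2$ and $w_2'$ are not comparable. So suppose $\beta$ is injective, $\delta$ is projective, and $\beta\le C\delta$, $\delta\le C\beta$ for some $C\ge 1$.

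The first step is to pass to full tensor norms of the same order $n$. Since $\beta=/\beta\s$ and $\delta=\s\delta/$, Lemma~\ref{Propo phi} gives, on symmetric tensor products, $\Phi(\beta)\sim /\Phi(\beta)\s=:A$ and $\Phi(\delta)\sim \s\Phi(\delta)/=:B$, so that $A$ is injective and $B$ projective; moreover $\Phi(\beta)$ and $\Phi(\delta)$ are equivalent on symmetric tensor products, because each restricts there to a tensor norm equivalent to the corresponding $\beta$, $\delta$. Now, two full tensor norms of order $n$ which are equivalent on all symmetric tensor products are equivalent (on all tensor products), with a constant depending only on $n$: this is built into Floret's theory of $\Phi$ (see \cite{Flo01(extension)}). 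Hence we obtain genuine full tensor norms $A$ (injective) and $B$ (projective) of order $n$ with $A\sim B$.

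The second step lowers the order. By Lemma~\ref{bajarelgrado}, $\underline{A}=\underline{(/\Phi(\beta)\s)}=/\,\underline{\Phi(\beta)}\,\s$ is again injective, $\underline{B}=\s\,\underline{\Phi(\delta)}\,/$ is again projective, and the last assertion of that lemma (applied in both directions) yields $\underline{A}\sim\underline{B}$ with the same constant. Iterating $n-2$ times we reach an injective full tensor norm $A^{(2)}$ and a projective full tensor norm $B^{(2)}$ of order $2$ with $A^{(2)}\sim B^{(2)}$. Being injective, $A^{(2)}=/A^{(2)}\s\le /\pi_2\s$; being projective, $B^{(2)}=\s B^{(2)}/\ge \s\varepsilon_2/$. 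Therefore $\s\varepsilon_2/\le C'\,/\pi_2\s$ for some constant $C'$, i.e.\ $w_2'$ would be dominated by $w_2$ (recall $\s\varepsilon_2/\sim w_2'$ and, dualizing, $/\pi_2\s\sim w_2$). This contradicts the Gordon--Lewis description of Grothendieck's natural $2$--fold tensor norms, in which $w_2$ and $w_2'$ are not comparable (\cite{GordonLewis1974}; see also \cite[Section 27]{DefFlo93}). The same argument applies verbatim to symmetric tensor norms on full tensor products.

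The step I expect to be the main obstacle is the first one: justifying that equivalence on symmetric tensor products upgrades to equivalence on all tensor products for full tensor norms, so that the reduction $\beta,\delta\rightsquigarrow A,B$ is legitimate (the order--lowering step is then routine, since Lemma~\ref{bajarelgrado} shows $\underline{(\,\cdot\,)}$ preserves injectivity, projectivity and the equivalence constant). Beyond that, everything rests on the non-elementary classical incomparability of $w_2$ and $w_2'$, which we invoke rather than reprove.
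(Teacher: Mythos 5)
Your proposal is correct and takes essentially the same route as the paper's proof: reduce to full tensor norms via Floret's $\Phi$, lower the order with Lemma~\ref{bajarelgrado}, and contradict the classical incomparability of $w_2$ and $w_2'$. The step you flag as the main obstacle is exactly what the paper disposes of by citing \cite[Theorem 2.3.(4)]{Flo01(extension)} (that $\Phi$ respects inequalities), so it is legitimately available; the only cosmetic difference is that the paper first derives $\s \varepsilon_{n,s} / \leq D \, / \pi_{n,s} \s$ at order $n$ and then lowers $\varepsilon_n$ and $\pi_n$ themselves, whereas you lower the general injective and projective norms and only compare with $/ \pi_2 \s$ and $\s \varepsilon_2 /$ at order $2$.
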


\begin{proof}
If they were equivalent, we would have $\s \varepsilon_{n,s} / \leq  \s \beta / \leq C_1 \delta \leq C_2 \beta \leq C_2 / \pi_{n,s} \s $.
By the fact that $\Phi$ respects inequalities \cite[Theorem 2.3.(4)]{Flo01(extension)}, the equivalences ${\pi_n}|_s \sim \pi_{n,s}$ and ${\varepsilon_n}|_s \sim \varepsilon_{n,s}$ and \cite[Theorem 2.3.(9)]{Flo01(extension)}, we obtain $\s \varepsilon_n / \leq D  / \pi_n \s $, for some constant $D$. By the obvious identities $\underline{\varepsilon_{n+1}}= \varepsilon_n$, $\underline{\pi_{n+1}} = \pi_n$ and applying Lemma~\ref{bajarelgrado} $n-2$ times we get $ \s \varepsilon_2 / \sim w_2' \leq D  / \pi_2 \s \sim w_2 $, a contradiction.
\end{proof}

\medskip

Now we are ready to prove  Theorem~\ref{son seis}.
\begin{proof} (of Theorem~\ref{son seis})
As a first step, we show  that $\pi_{2,s}$, $\varepsilon_{2,s}$, $/\pi_{2,s}\s$ and $\s \varepsilon_{2,s} / $ are the non-equivalent natural s-tensor norms for $n=2$. We can see in \cite[Chapter 27]{DefFlo93} that $\pi_{2}$, $\varepsilon_{2}$, $/\pi_{2}\s$ and $\s \varepsilon_{2} / $ are the only natural 2-fold tensor norms that are symmetric. So we can use Lemma~\ref{Propo phi}, the fact that $\Phi(\pi_{2,s})$ is equivalent to $ \sim \pi_2$ on the symmetric tensor product to conclude our claim. This also shows the following dominations: $\varepsilon_{2,s} \leq \s \varepsilon_{2,s} / \leq /\pi_{2,s}\s \leq \pi_{2,s}$.

To prove that, for $n\ge 3$,  all the possible natural $n$-fold s-tensor norms are listed in (\ref{normasnaturales}), it is enough to show that $ / \s / \pi_{n,s} \s / \s$ coincides with $/ \pi_{n,s} \s $. But this follows from the first equality in Lemma~\ref{lemma1} and the projectiveness of  $\pi_{n,s}$, which means that $\pi_{n,s} = \s \pi_{n,s} /$.

\medskip
Now we see that the listed norms are all different.
First,  $/ \pi_{n,s} \s$ and $ \s / \pi_{n,s} \s /$ cannot be equivalent by Lemma~\ref{inyectiva-proyectiva}. Analogously,  $\s \varepsilon_{n,s} / $ is not equivalent to $ / \s \varepsilon_{n,s} / \s $. Until now, everything works just as in the case $n=2$. The difference appears when we consider the relationship between $ \s / \pi_{n,s} \s /$ and $ \s \varepsilon_{n,s} /$.


For $n\ge 3$,  it is shown in \cite{CarDim07,Per04(4-linear),Varapoulus(1975)} that $ / \pi_{n,s} \s $ and $\varepsilon_{n,s}$ cannot be equivalent in any infinite dimensional Banach space. Since on  $\otimes^{n,s} \ell_1$ the s-tensor norm  $ \s / \pi_{n,s} \s /$ coincides with $ / \pi_{n,s} \s $ and $ \s \varepsilon_{n,s} /$ coincides with $\varepsilon_{n,s}$, it follows that   $ \s / \pi_{n,s} \s /$ and $ \s \varepsilon_{n,s} /$ are not equivalent s-tensor norms, since they are not equivalent on $\otimes^{n,s} \ell_1$ (we will actually see in Theorem~\ref{no son equivalentes} that $ \s / \pi_{n,s} \s /$ and $ \s \varepsilon_{n,s} /$ cannot be equivalent on any infinite dimensional Banach space).

By duality, conclude that the six listed norms in Theorem~\ref{son seis} are different.

\medskip

It is clear that all the dominations presented in (\ref{normasnaturales}) hold, so we must show that $/ \pi_{n,s} \s$ does not dominate $ \s \varepsilon_{n,s} / $ nor $ \s \varepsilon_{n,s} / $ dominates $/ \pi_{n,s} \s$.  Note that the inequality $ / \pi_{n,s} \s \leq C \s \varepsilon_{n,s} /$ would imply the equivalence between  $ / \pi_{n,s} \s $ and $   \varepsilon_{n,s} $ on $\otimes^{n,s} \ell_1$, which is impossible by the already mentioned result of \cite{CarDim07,Per04(4-linear),Varapoulus(1975)}. Finally, reasoning as in the proof of Lemma~\ref{inyectiva-proyectiva}, we also have $ \s \varepsilon_{n,s} / $ does not dominate $/ \pi_{n,s} \s$.

The maximal polynomial ideals  associated  to the natural norms are easily obtain using Proposition \ref{descripcion} and the fact that $\Q_{/ \beta \s}$ and $\Q_{\s \gamma /}$ are associated to the norms $\s \beta' /$ and $/ \gamma' \s$ respectively.
\end{proof}

\bigskip

The 2-fold tensor norms $\pi_2$ and $\s \varepsilon_2 /$ (which is equivalent to $w_2'$) share two interesting properties. The first property is that they dominate their dual tensor norm. Clearly $\pi_2'=\varepsilon_2\le \pi_2 $. Also, it can be seen in \cite[27.2]{DefFlo93} that  $w_2$ is dominated by $w_2'$ (or, analogously, $/\pi_2\s$ is dominated by $\s \varepsilon_2 /$). The second property is that both $\pi_2$ and $w_2'$ preserve the Banach algebra structure \cite{Carne78}.
These two properties are enjoyed, of course, by their corresponding 2-fold s-tensor norms (see the proof of Theorem~\ref{son seis} for the first one, and Section~\ref{section preserving} for the second one).
As we have already seen, the $n$ dimensional analogue of the s-tensor norm $\s \varepsilon_{2,s} /$ splits into two non-equivalent ones when passing from tensor products of order 2 to tensor products of order $n\geq3$. Namely,  $\s \varepsilon_{n,s} /$ and $ \s / \pi_{n,s}  \s /$.
It is remarkable that the two mentioned properties are  enjoyed only by $\s / \pi_{n,s}  \s /$ and not by $\s \varepsilon_{n,s} /$, as seen in Theorems~\ref{son seis} and~\ref{cuales preservan algebras}. Therefore, we could say that, in some sense, the $n$-fold symmetric analogue of $w_2'$ for $n\ge 3$ should be  $\s / \pi_{2,s}  \s /$ rather than the simpler (and probably nicer) $\s \varepsilon_{2,s} /$.

In the proof of Theorem~\ref{son seis} we have shown that $\s \varepsilon_{n,s} /$ and $\s / \pi_{n,s} \s /$ are not equivalent on $\otimes^{n,s}\ell_1$. One may wonder if there exist an infinite dimensional Banach space such that  $\s \varepsilon_{n,s} /$ and $\s / \pi_{n,s} \s /$  are equivalent in $\otimes^{n,s}E$ for $n\geq 3$. We see that this is not the case in Theorem~\ref{no son equivalentes}.
To prove the theorem we will need the following  proposition.

\begin{proposition}\label{accesibles los asociados a inyectivas}
Let $\q$ be a polynomial ideal and $\beta$ its associated tensor norm. If $\beta$ is injective then $\q$ is accessible.
\end{proposition}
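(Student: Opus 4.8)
The plan is to reduce everything to a single finite--codimensional quotient and to check that the $\Q$--norm of $q$ is reproduced there exactly, so that the required estimate holds even with $\varepsilon=0$. The only structural input is that injectivity of $\beta$ is equivalent to projectivity of $\beta'$: since $(/\beta\s)'=\s\beta'/$, from $\beta=/\beta\s$ we obtain $\beta'=\s\beta'/$. So let $E\in BAN$, $q\in\p_f(E)$ and $\varepsilon>0$. I would first write $q$ as the polynomial induced by a finite symmetric tensor $u=\sum_{j=1}^{m}\lambda_j\varphi_j^{\otimes n}\in\otimes^{n,s}E'$, put $M':=\mathrm{span}\{\varphi_1,\dots,\varphi_m\}\in FIN(E')$ and $L:={}^{\perp}M'$. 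Then $L$ is closed of finite codimension, $(E/L)'\overset{1}{=}L^{\perp}=M'$ (the last equality because the finite--dimensional subspace $M'$ is $w^{*}$--closed in $E'$), $u\in\otimes^{n,s}(E/L)'$, and $q=p\circ Q_{L}^{E}$ where $p\in\p(E/L)$ is the polynomial induced by $u$. I claim this $L$ and this $p$ satisfy $\|p\|_{\Q(E/L)}=\|q\|_{\Q(E)}$, which is more than enough.

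For the nontrivial inequality $\|p\|_{\Q(E/L)}\le\|q\|_{\Q(E)}$, the argument I have in mind runs as follows. The quotient map $Q_{L}^{E}\colon E\twoheadrightarrow E/L$ is a metric surjection; since $\beta'$ is projective, the induced map $\widetilde{\otimes}^{n,s}_{\beta'}Q_{L}^{E}\colon\widetilde{\otimes}^{n,s}_{\beta'}E\overset{1}{\twoheadrightarrow}\otimes^{n,s}_{\beta'}(E/L)$ is again a metric surjection (this is the $n$--fold symmetric version of \cite[Theorem 20.6]{DefFlo93}). Hence its adjoint $\big(\otimes^{n,s}_{\beta'}(E/L)\big)'\overset{1}{\hookrightarrow}\big(\widetilde{\otimes}^{n,s}_{\beta'}E\big)'$ is an isometric embedding. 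By the polynomial representation theorem \cite[3.2]{Flo02(Ultrastability)} these duals are $\Q^{max}(E/L)$ and $\Q^{max}(E)$ (note $\Q^{max}$ is the maximal ideal associated to $\beta$, since $\Q$ is), and evaluating the adjoint on elementary tensors $x^{\otimes n}$ identifies it with the map $r\mapsto r\circ Q_{L}^{E}$. As $E/L$ is finite dimensional, $\Q^{max}(E/L)\overset{1}{=}\Q(E/L)$. Applying the isometry to $p$ gives $\|p\|_{\Q(E/L)}=\|p\circ Q_{L}^{E}\|_{\Q^{max}(E)}=\|q\|_{\Q^{max}(E)}\le\|q\|_{\Q(E)}$, where the last inequality is the definition of the maximal hull.

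The reverse inequality is automatic from the ideal property: $\|q\|_{\Q(E)}=\|p\circ Q_{L}^{E}\|_{\Q(E)}\le\|p\|_{\Q(E/L)}\,\|Q_{L}^{E}\|^{n}\le\|p\|_{\Q(E/L)}$. Hence $\|p\|_{\Q(E/L)}=\|q\|_{\Q(E)}\le(1+\varepsilon)\|q\|_{\Q(E)}$, and $\Q$ is accessible.

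I expect the single nonformal step to be the implication ``$Q_{L}^{E}$ metric surjection $\Rightarrow$ $\widetilde{\otimes}^{n,s}_{\beta'}Q_{L}^{E}$ metric surjection'': this is exactly where projectivity of $\beta'$ (equivalently, injectivity of $\beta$) is used, and without it one would only retain $\|q\|_{\Q^{max}(E)}\le\|p\|_{\Q(E/L)}$. Everything else---the identifications $\Q(M)\overset{1}{=}\big(\otimes^{n,s}_{\beta'}M\big)'$ for finite dimensional $M$, $\Q^{max}(E)\overset{1}{=}\big(\widetilde{\otimes}^{n,s}_{\beta'}E\big)'$, the interpretation of these duals as polynomials, and $\Q^{max}(M)\overset{1}{=}\Q(M)$ on $FIN$---is routine and already contained in the Preliminaries.
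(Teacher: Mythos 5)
Your proposal is correct and shares the paper's skeleton: write $q=\sum_{j}(x_j')^n$, set $L=\bigcap_j\ker x_j'$, factor $q=p\circ Q_L^E$ through the finite--dimensional quotient, and show that the $\Q$--norm is preserved exactly, so that accessibility holds even with $\varepsilon=0$. Where you diverge is in the mechanism for the norm identity. The paper stays on the dual side: $(Q_L^E)'\colon(E/L)'\to E'$ is an isometric embedding, injectivity of $\beta$ makes $\otimes^{n,s}(Q_L^E)'$ a $\beta$--isometry, and the $\Q$--norms of $p$ and $q$ are read off as $\beta$--norms of their representing tensors in $\otimes^{n,s}(E/L)'$ and $\otimes^{n,s}E'$. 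You instead pass to $\beta'=\s\beta'/$, use that a projective s-tensor norm carries the metric surjection $Q_L^E$ to a metric surjection of tensor products, and dualize through the representation theorem to obtain the isometric embedding $\Q^{max}(E/L)\hookrightarrow\Q^{max}(E)$, $r\mapsto r\circ Q_L^E$. These are dual formulations of the same fact, and both are legitimate. A small point in your favour: your route makes explicit the content of the paper's final equality $\beta\big(\sum_j\otimes^n x_j',\otimes^{n,s}E'\big)=\|q\|_{\Q(E)}$, which for infinite--dimensional $E$ and a not-necessarily-maximal $\Q$ is exactly the place where one has to pass through $\Q^{max}$ and use the lifting property of $\beta'$ (equivalently, injectivity of $\beta$ together with finite generation); the price is that you must invoke the representation theorem and the metric-surjection property for projective norms, whereas the paper's argument is carried out entirely at the level of tensors in $E'$.
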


\begin{proof}
Let $q$ be a finite type polynomial on $E$ and choose $(x_j')_{j=1}^r$ in $E'$ such that $q= \sum_{j=1}^r (x_j')^n$. We set $L=\bigcap_{j=1}^r Ker(x_j')$, which is a finite codimensional subspace of $E$.
For each $j = 1, \dots, r$, let $\overline{x'}_j \in (E/L)'$ be defined by $ \overline{x'}_j (\overline{x}) := x_j'(x)$ (where $\overline x$ denotes the class of $x$ in $E/L$).
If $Q_E^L: E\to  E/L$ is the quotient map and $p$ is the polynomial on $E/L$ given by $p= \sum_{j=1}^r (\overline{x'}_j)^n$, we have $q = p \circ Q_{L}^E$.
Also, since $\beta$ is injective we have the isometry $$ \otimes^{n,s} (Q_{L}^E)': \otimes^{n,s}_{\beta} (E/L)'  \overset 1 \hookrightarrow \otimes^{n,s}_{\beta} E'.$$
This altogether gives
\begin{align*}
\|p\|_{\q} & = \beta \big(\sum_{j=1}^r \otimes^n \overline{x}_j',\otimes^{n,s} (E/L)' \big) \\
& = \beta \big(\otimes^{n,s} (Q_{L}^E)'(\sum_{j=1}^r \otimes^n \overline{x'}_j),\otimes^{n,s} E'\big) \\
& = \beta \big(\sum_{j=1}^r \otimes^n x_j',\otimes^{n,s} E'\big) = \|q\|_{\q},
\end{align*}
which shows the accessibility of $\Q$.
\end{proof}

\begin{theorem}\label{no son equivalentes}
For $n \geq 3$, $\s \varepsilon_{n,s} /$ and $\s / \pi_{n,s} \s /$ are equivalent in $\otimes^{n,s}E$ if and only if $E$ is finite dimensional. The same happens if   $/ \pi_{n,s} \s$ and $/ \s \varepsilon_{n,s} / \s$ are equivalent on $E$.
\end{theorem}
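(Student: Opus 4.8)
The plan is as follows. The ``if'' direction is immediate: if $E$ is finite dimensional then $\otimes^{n,s}E$ is finite dimensional, so any two norms on it are equivalent. For the ``only if'' direction I would first reduce the statement about the projective pair $\s\varepsilon_{n,s}/$, $\s/\pi_{n,s}\s/$ to the one about the injective pair $/\pi_{n,s}\s$, $/\s\varepsilon_{n,s}/\s$, and then prove the latter by bringing everything down to $\otimes^{n,s}\ell_2$, where a direct argument together with the cited inequivalence of $/\pi_{n,s}\s$ and $\varepsilon_{n,s}$ yields the contradiction.

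For the reduction between the two pairs I would use the duality relations $(\s\varepsilon_{n,s}/)'=/\pi_{n,s}\s$ and $(\s/\pi_{n,s}\s/)'=/\s\varepsilon_{n,s}/\s$ together with the following observation. If $\s\varepsilon_{n,s}/$ and $\s/\pi_{n,s}\s/$ are $C$-equivalent on $\otimes^{n,s}E$, then, being projective, they are $C$-equivalent on $\otimes^{n,s}(E/L)$ for every finite-codimensional $L\subset E$; dualizing on these finite dimensional quotients, $/\pi_{n,s}\s$ and $/\s\varepsilon_{n,s}/\s$ are $C$-equivalent on $\otimes^{n,s}L^{\perp}$ (with $L^{\perp}\subset E'$). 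Since every finite dimensional subspace of $E'$ is of this form, and since $/\pi_{n,s}\s$ and $/\s\varepsilon_{n,s}/\s$ are injective (so on each $\otimes^{n,s}N$, $N\in FIN(E')$, each agrees with its restriction from $\otimes^{n,s}E'$), one gets $/\pi_{n,s}\s\sim/\s\varepsilon_{n,s}/\s$ on $\otimes^{n,s}E'$. As $E$ is infinite dimensional precisely when $E'$ is, the projective statement for $E$ follows once the injective statement is known for $E'$.

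So, assuming $E$ infinite dimensional and, towards a contradiction, that $/\pi_{n,s}\s\sim/\s\varepsilon_{n,s}/\s$ on $\otimes^{n,s}E$, injectivity of both norms transfers the equivalence, with the same constant, to $\otimes^{n,s}M$ for all $M\in FIN(E)$. By Dvoretzky's theorem $FIN(E)$ contains, for each $k$, subspaces $(1+\tfrac{1}{k})$-isomorphic to $\ell_2^k$, and since the norms are finitely generated and $\ell_2^k$ is $1$-complemented in $\ell_2$, we obtain $/\pi_{n,s}\s\sim/\s\varepsilon_{n,s}/\s$ on $\otimes^{n,s}\ell_2$. Next I would show that $\s\varepsilon_{n,s}/$ and $\varepsilon_{n,s}$ are equivalent on $\otimes^{n,s}\ell_2$: realize $\ell_2$ as a complemented subspace of $L_1$ via an isometric embedding $j\colon\ell_2\hookrightarrow L_1$ and a bounded projection $P\colon L_1\to\ell_2$ with $Pj=\mathrm{id}$; then, using injectivity of $\varepsilon_{n,s}$ (so $\otimes^{n,s}j$ is an $\varepsilon_{n,s}$-isometry onto its range), the coincidence $\s\varepsilon_{n,s}/=\varepsilon_{n,s}$ on the $\mathcal{L}_1$-space $L_1$, and the metric mapping property of $\s\varepsilon_{n,s}/$ (so $\|\otimes^{n,s}P\|\le\|P\|^n$), the identity $\otimes^{n,s}P\circ\otimes^{n,s}j=\mathrm{id}$ gives $\s\varepsilon_{n,s}/\le\|P\|^n\varepsilon_{n,s}$ on $\otimes^{n,s}\ell_2$. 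Since $\varepsilon_{n,s}\le/\s\varepsilon_{n,s}/\s\le\s\varepsilon_{n,s}/$, this forces $/\s\varepsilon_{n,s}/\s\sim\varepsilon_{n,s}$ on $\otimes^{n,s}\ell_2$, hence $/\pi_{n,s}\s\sim\varepsilon_{n,s}$ on $\otimes^{n,s}\ell_2$, contradicting the result of \cite{CarDim07,Per04(4-linear),Varapoulos(1975)} that for $n\ge3$ these two norms are inequivalent on every infinite dimensional Banach space.

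The bulk of the ``rigidity'' is thus imported from the cited $n\ge3$ theorem, and the main obstacle I anticipate is organizational rather than conceptual: getting the transfer principles exactly right — for injective norms, equivalence on $\otimes^{n,s}E$ descends to all $\otimes^{n,s}M$ with $M\in FIN(E)$, and dually for projective norms it descends to all finite dimensional quotients — and making the reduction to $\ell_2$ airtight. The accessibility of the polynomial ideals associated to the injective norms (Proposition~\ref{accesibles los asociados a inyectivas}) can be used to phrase this descent at the level of polynomial ideals as an alternative to the tensor-norm argument above; apart from that, the only genuine classical inputs needed beyond the cited result are Dvoretzky's theorem and the complementation of $\ell_2$ inside $L_1$.
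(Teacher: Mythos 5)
Your overall architecture mirrors the paper's: reduce the projective pair to the injective pair by duality, descend to $\ell_2$ via Dvoretzky and injectivity, and then derive a contradiction with the cited fact that $/ \pi_{n,s} \s$ and $\varepsilon_{n,s}$ (equivalently, extendible versus integral polynomials) cannot coincide on an infinite dimensional space. The reduction steps are fine (your finite--dimensional quotient/annihilator duality argument is a legitimate, slightly more hands-on alternative to the paper's route through accessibility of the associated ideals and Floret's Proposition~3.6). However, the final step contains a fatal error: $\ell_2$ is \emph{not} complemented in $L_1[0,1]$. By Grothendieck's theorem every bounded operator from $L_1$ into a Hilbert space is $1$-summing, hence completely continuous; if $P\colon L_1\to \ell_2$ were a projection onto an isometric copy of $\ell_2$, its restriction to that copy would be the identity of $\ell_2$, which is not completely continuous. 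So the projection $P$ with $Pj=\mathrm{id}$ that your argument hinges on does not exist.

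Moreover, the intermediate claim you try to extract from it --- that $\s \varepsilon_{n,s} /$ and $\varepsilon_{n,s}$ are equivalent on $\otimes^{n,s}\ell_2$ --- is itself false: dualizing on the self-dual spaces $\ell_2^k$, it would say $\pi_{n,s}\le C\, / \pi_{n,s} \s$ uniformly on $\otimes^{n,s}\ell_2^k$, i.e.\ that every continuous $n$-homogeneous polynomial on $\ell_2$ is extendible with uniformly comparable norm, which fails already for $n=2$ (the inner product does not extend to $\ell_\infty\times\ell_\infty$, again by Grothendieck's theorem). The correct way to exploit the embedding $\ell_2\hookrightarrow L_1$ is the one the paper uses: work on the \emph{dual} side. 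Since $/ \s \varepsilon_{n,s} / \s$ is injective, $\otimes^{n,s}_{/ \s \varepsilon_{n,s} / \s}\ell_2$ sits isometrically inside $\otimes^{n,s}_{/ \s \varepsilon_{n,s} / \s}L_1$, so by Hahn--Banach every $/ \s \varepsilon_{n,s} / \s$-continuous polynomial on $\ell_2$ extends to one on $L_1$ --- no projection is needed. On $L_1$ one has $\varepsilon_{n,s}\le / \s \varepsilon_{n,s} / \s\le \s \varepsilon_{n,s} /=\varepsilon_{n,s}$, so the extension is integral, whence $\mathcal{Q}_{/ \s \varepsilon_{n,s} / \s}(\ell_2)\subset\mathcal{P}_I(\ell_2)$; combined with $\mathcal{P}_e(\ell_2)\subset\mathcal{Q}_{/ \s \varepsilon_{n,s} / \s}(\ell_2)$ (your Dvoretzky step, phrased for the dual ideals) this contradicts the existence of extendible non-integral polynomials. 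You should replace your last paragraph by this Hahn--Banach extension argument; as written, the proof does not close.
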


\begin{proof}
We will first prove that if $E$ is infinite dimensional, then $/ \pi_{n,s} \s$ and $/ \s \varepsilon_{n,s} / \s$ are not equivalent in $\otimes^{n,s}E$.
Suppose they are. Then, if we denote by $\p_e$ the ideal of extendible polynomials,  we have
$$ \p_e(E) = \big( \os_{/ \pi_{n,s} \s} E \big)' = \big( \os_{/
\s \varepsilon_{n,s} / \s} E \big)'= \Q_{/ \s \varepsilon_{n,s} / \s}(
E).$$
By the open mapping theorem, there must be a constant $M> 0$ such that $\|p\|_{\Q_{/ \s \varepsilon_{n,s} / \s}(E)}  \leq M \|p\|_{\p_e(
E)},$ for every
extendible polynomial $p$ on
$E$.
If $F$ is a subspace of  $E$, any extendible polynomial on
$F$ extends to an extendible polynomial on $E$ with the same extendible norm.
Therefore, for every subspace $F$ of $E$  and every extendible
polynomial $q$ on $F$, we have $ \|q\|_{\Q_{/ \s \varepsilon_{n,s} /
\s}(F)} \leq M \|q\|_{\p_e(F)}$.

Since $E$ is an infinite dimensional space, by Dvoretzky's theorem it contains $(\ell_2^k)_k$ uniformly. Then  there exists a constant $C> 0$ such that for every $k$ and every polynomial $q$ on $\ell_2^k$, we have
$$ \|q\|_{\Q_{/ \s \varepsilon_{n,s} /
\s}(\ell_2^k)} \leq C \|q\|_{\p_e(\ell_2^k)}.$$ Since the ideal of extendible polynomials is maximal (it is dual to an s-tensor norms), we deduce that  \begin{equation}\label{extendibles incluidos}\p_e(\ell_2) \subset \Q_{/ \s \varepsilon_{n,s} / \s}(\ell_2).\end{equation}
Let us show that this is not true. Since $/ \s \varepsilon_{n,s} / \s$ is injective and we have an inclusion $\ell_2 \hookrightarrow L_1[0,1]$, each $p \in  \Q_{/ \s \varepsilon_{n,s} / \s}(\ell_2)$ can be extended to a $/ \s \varepsilon_{n,s} / \s$-continuous  polynomial $\widetilde{p}$  on $L_1[0,1].$ Now, $\varepsilon_{n,s}$ coincides with $\s \varepsilon_{n,s} /$ on $L_1[0,1]$, which is in turn dominated by $/ \s \varepsilon_{n,s} / \s$. Therefore, the polynomial $\widetilde{p}$ is actually $\varepsilon_{n,s}$-continuous or, in other words, integral. Since $\widetilde{p}$ extends $p$, this must also be integral, and we have shown that $\Q_{/ \s \varepsilon_{n,s} / \s}(\ell_2)$ is contained in $\p_I(\ell_2)$.
But it is shown in  \cite{CarDim07,Per04(4-linear),Varapoulus(1975)} that there are always extendible non-integral polynomials on any infinite dimensional Banach space, so (\ref{extendibles incluidos}) cannot hold.
This contradiction shows that $/ \pi_{n,s} \s$ and $/ \s \varepsilon_{n,s} / \s$ cannot be equivalent on $E$.

Now we will show that $\s \varepsilon_{n,s} /$ and $\s / \pi_{n,s} \s /$ are not equivalent in $\otimes^{n,s}E$, for any infinite dimensional Banach space $E$.
Suppose they are. By duality, we have $ \q_{\s \varepsilon_{n,s} /}= \q_{\s / \pi_{n,s} \s /}$ with equivalent norms.
Proposition \ref{accesibles los asociados a inyectivas} ensures that the polynomial ideals $ \q_{\s \varepsilon_{n,s} /},\; \q_{\s / \pi_{n,s} \s /}$ are both accesible, since they are associated to the injective norms  $/ \pi_{n,s} \s, $ and $ / \s \varepsilon_{n,s} / \s$ respectively. Thus, by \cite[Proposition 3.6]{Flo01} we have:
$$ \widetilde{\otimes}^{n,s}_{/ \pi_{n,s} \s}  E' \overset{1}{\hookrightarrow} \q_{\s \varepsilon_{n,s} /}(E), \text{ and } \widetilde{\otimes}^{n,s}_{/ \s \varepsilon_{n,s} / \s}  E' \overset{1}{\hookrightarrow} \q_{\s / \pi_{n,s} \s /}(E).$$
But this implies that $/ \pi_{n,s} \s$ and $/ \s \varepsilon_{n,s} / \s$ are equivalent in $\os E'$, which is impossible by the already proved first statement of the Theorem.
\end{proof}

\section{s-Tensor norms preserving Banach algebra structures}\label{section preserving}
Carne in \cite{Carne78} described the natural 2-fold tensor norms that preserve Banach algebras.
In this section we will show that $\pi_{n,s}$ and $\s / \pi_{n,s} \s /$ are the only  natural s-tensor norms that preserve the algebra structure.

For a given Banach algebra A we will denote $m(A) : A \otimes_{\pi_2} A \to A$ the map induced by the multiplication $A \times A \to A$.
The following theorem is a symmetric version of Carne \cite[Theorem~1]{Carne78}. Its proof is obtained by adapting the one in \cite{Carne78} for the symmetric setting.

\begin{theorem}\label{carne}
For an s-tensor norm $\beta$ of order $n$ the following conditions are equivalent:
\begin{enumerate}
\item If $A$ is Banach algebra, the $n$-fold symmetric tensor product $\widetilde{\otimes}^{n,s}_\beta A$ is a Banach algebra with the natural algebra structure.
\item For all Banach spaces $E$ and $F$ there is a natural continuous linear map
$$f: \big( \os_\beta E \big) \otimes_{\pi_2} \big( \os_\beta F \big) \to \big( \os_\beta (E \otimes_{\pi_2} F) \big)$$ with
$$f \big( (\otimes^n x)  \otimes (\otimes^n y) \big) = \otimes^n (x \otimes y).$$
\item For all Banach spaces $E$ and $F$ there is a natural continuous map
$$g: \big( \os_{\beta'} (E \otimes_{\varepsilon_2} F)  \big) \to ( \os_{\beta'} E) \otimes_{\varepsilon_2}  ( \os_{\beta'} F)$$ given by
$$ g \big( \otimes^n (x \otimes y) \big) = (\otimes^n x) \otimes (\otimes^n y).$$
\item For all Banach spaces $E$ and $F$ there is a natural continuous map
$$h: \os_{\beta'} \mathcal{L}(E,F) \to \mathcal{L}( \os_{\beta} E, \os_{\beta'} F),$$
with $$h( \otimes^n T) (\otimes^n x) = \otimes^n (Tx).$$
\end{enumerate}
If one, hence all, of the above hold, then there are constants $c_1, c_2, c_3, c_4$ so that
\begin{enumerate}
\item $\| m ( \widetilde{\otimes}^{n,s}_\beta A )\| \leq c_1 \|m(A)\|^n.$
\item $\|f\| \leq c_2$ for all $E$ and $F$.
\item $\|g\| \leq c_3$ for all $E$ and $F$.
\item $\|h\| \leq c_4$ for all $E$ and $F$.
\end{enumerate}
and the least values of these four agree.

\end{theorem}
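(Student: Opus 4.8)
The plan is to adapt Carne's original argument \cite[Theorem~1]{Carne78} to the symmetric $n$-fold setting, establishing a cycle of implications $(1)\Leftrightarrow(2)$, $(2)\Leftrightarrow(3)$, $(3)\Leftrightarrow(4)$, and then tracking the constants. First I would prove $(2)\Rightarrow(1)$: given a Banach algebra $A$ with multiplication $m(A):A\otimes_{\pi_2}A\to A$, the composite
$$\big(\os_\beta A\big)\otimes_{\pi_2}\big(\os_\beta A\big)\xrightarrow{f}\os_\beta(A\otimes_{\pi_2}A)\xrightarrow{\os m(A)}\os_\beta A$$
is, by metric mapping property (2) of an s-tensor norm, bounded by $\|f\|\,\|m(A)\|^n$; on elementary tensors it computes $(\otimes^n a)\otimes(\otimes^n b)\mapsto\otimes^n(ab)$, which is the natural multiplication on $\os_\beta A$, so $\os_\beta A$ is a Banach algebra with $\|m(\os_\beta A)\|\le\|f\|\,\|m(A)\|^n$. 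For $(1)\Rightarrow(2)$ one runs the universal trick: given $E,F$, apply (1) to the (non-unital) algebra $A=E\otimes_{\pi_2}F$ with multiplication $(x_1\otimes y_1)(x_2\otimes y_2)=0$ — this does not work directly, so instead, following Carne, one takes $A$ to be a suitable algebra built so that $E$ and $F$ embed and the multiplication restricts to the canonical bilinear map $E\times F\to E\otimes_{\pi_2}F$; the free construction is $A=\mathbb{K}\oplus E\oplus F\oplus(E\otimes_{\pi_2}F)$ with the obvious truncated product, which has $\|m(A)\|\le 1$. Then $f$ is the restriction of $m(\os_\beta A)$ to $(\os_\beta E)\otimes_{\pi_2}(\os_\beta F)\hookrightarrow\os_\beta A\otimes_{\pi_2}\os_\beta A$, and $\|f\|\le\|m(\os_\beta A)\|$.

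Next I would establish $(2)\Leftrightarrow(3)$ by duality. Using the definition of the dual s-tensor norm, $\os_{\beta'}G\overset1=(\os_\beta G')'$ on finite dimensional $G$, and the duality $(\os_\beta E)\otimes_{\pi_2}(\os_\beta F)$ versus $(\os_{\beta'}E')\otimes_{\varepsilon_2}(\os_{\beta'}F')$, the map $f$ for the pair $(E',F')$ transposes to a map $g$ for the pair $(E,F)$ — one checks on elementary tensors that $f^t$ sends $\otimes^n(x\otimes y)$ to $(\otimes^n x)\otimes(\otimes^n y)$, and $\|g\|=\|f\|$ because transposition is isometric and tensor norms are finitely generated (so it suffices to verify the norm bound on finite dimensional subspaces and pass to the limit). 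The reverse implication $(3)\Rightarrow(2)$ is the symmetric transposition again. Then $(3)\Leftrightarrow(4)$ uses the identification $\mathcal{L}(E,F)\overset1=$ (a space containing) $E'\otimes_{\varepsilon_2}F$ isometrically on finite dimensional spaces, together with $\os_{\beta'}$ applied to it and the natural map $\os_{\beta'}(E'\otimes_{\varepsilon_2}F)\to(\os_{\beta'}E')\otimes_{\varepsilon_2}(\os_{\beta'}F)\subset\mathcal{L}(\os_\beta E,\os_{\beta'}F)$, where the last inclusion is the canonical embedding of the $\varepsilon_2$-tensor product into the operator space; checking the action on $\otimes^n T$ and tracking norms gives $\|h\|=\|g\|$. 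Each of these is a finite dimensional verification extended by finite generation and a density/limiting argument.

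Finally, the equality of the least constants: the chain above shows $c_1\le c_2$, $c_2\le c_1$ (from the free algebra with $\|m(A)\|\le1$), $c_2=c_3=c_4$, so all four optimal constants coincide; one should remark that the infimum defining each $c_i$ over all $E,F$ (resp.\ all $A$) is attained in the sense that a single universal constant works, because of the free/universal objects used. The main obstacle I anticipate is the construction of the right universal Banach algebra in $(1)\Rightarrow(2)$ with multiplication norm exactly controlled (ideally $\le1$), so that no loss of constant occurs there; Carne handles this in the $2$-fold case and the symmetric $n$-fold adaptation should be formal, but one must be careful that the truncated multiplication on $\mathbb{K}\oplus E\oplus F\oplus(E\otimes_{\pi_2}F)$ is genuinely $\pi_2$-bounded with the right constant, and that the embedding $\os_\beta E\otimes_{\pi_2}\os_\beta F\hookrightarrow\os_\beta A\otimes_{\pi_2}\os_\beta A$ is isometric — the latter following from the metric mapping property of $\beta$ applied to the inclusions $E,F\hookrightarrow A$ together with the fact that $\pi_2$ respects subspaces up to the relevant constant. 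A secondary technical point is justifying the passage from finite dimensional identities to arbitrary Banach spaces in $(2)\Leftrightarrow(3)$ and $(3)\Leftrightarrow(4)$, which is where finite generation of all the tensor norms involved and the definition of $\beta'$ on $BAN$ as an infimum over $FIN$ are used.
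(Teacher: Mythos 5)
Your proposal takes the same route as the paper, which gives no written proof of Theorem~\ref{carne} but simply states that it is obtained by adapting Carne's argument for \cite[Theorem~1]{Carne78} to the symmetric setting; your sketch is precisely such an adaptation, with the correct key ingredients (the truncated universal algebra $\mathbb{K}\oplus E\oplus F\oplus(E\otimes_{\pi_2}F)$ with unit-norm multiplication so that no constant is lost in $(1)\Rightarrow(2)$, duality on $FIN$ plus finite generation for $(2)\Leftrightarrow(3)$, and the isometric embedding of $\varepsilon_2$-tensor products into operator spaces for $(3)\Leftrightarrow(4)$). Two cosmetic points only: in $(1)\Rightarrow(2)$ the map $f$ is the restricted multiplication composed with $\os_\beta$ of the norm-one projection $A\twoheadrightarrow E\otimes_{\pi_2}F$ (so as to land in $\os_\beta(E\otimes_{\pi_2}F)$), and you do not need the embedding $\big(\os_\beta E\big)\otimes_{\pi_2}\big(\os_\beta F\big)\to\big(\os_\beta A\big)\otimes_{\pi_2}\big(\os_\beta A\big)$ to be isometric --- norm at most one, which the metric mapping property gives for free, already yields $\|f\|\le\|m(\os_\beta A)\|$.
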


If the s-tensor norm $\beta$ preserves Banach algebras, then we will call the common least value of the constants in the theorem, \emph{the Banach algebra constant} of $\beta$.

An important comment is in order: if  we take $E=F$ and $T=id_E$ in $(4)$, then we obtain
$\|h( \os id_E )\|~\leq~c_4$. But it is plain that $h(\otimes^n id_E)$ is just $id_{\os E}$. Therefore, we have
$$ \| id_{\os E}: \os_{\beta} E \to \os_{\beta'} E \| \leq c_4,$$
which means that $\beta' \leq c_4 \beta$. So we can state the following remark.

\begin{remark}
If $\beta$ is an s-tensor norm which preserves Banach algebras there is a constant $k$ such that $\beta' \leq k \beta$.
\end{remark}

The following Theorem is the main result of this section. The proof that $\pi_s$ preserves Banach algebra is similar to one for $\pi_2$ in \cite{Carne78}, and we include it for completeness.

\begin{theorem}\label{cuales preservan algebras} The only natural s-tensor norms of order $n$ which preserves Banach algebras are: $\pi_{n,s}$ and $\s / \pi_{n,s} \s /$. Furthermore, the Banach algebra constants of both norm are exactly one.
\end{theorem}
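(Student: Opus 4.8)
The plan is to intersect the list of natural s-tensor norms from Theorem~\ref{son seis} with the necessary condition provided by the Remark, and then to verify by hand that the two surviving norms work, with Banach algebra constant~$1$.

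\emph{Eliminating four of the six norms.} By Theorem~\ref{son seis} the natural s-tensor norms are $\pi_{n,s}$, $\s/\pi_{n,s}\s/$, $/\pi_{n,s}\s$, $\s\varepsilon_{n,s}/$, $/\s\varepsilon_{n,s}/\s$ and $\varepsilon_{n,s}$ (for $n=2$ the diamond collapses and there are four norms), and they are pairwise non-equivalent. From $(/\beta\s)'=\s\beta'/$ and $(\s\beta/)'=/\beta'\s$ the dual pairs are $\pi_{n,s}\leftrightarrow\varepsilon_{n,s}$, $/\pi_{n,s}\s\leftrightarrow\s\varepsilon_{n,s}/$ and $\s/\pi_{n,s}\s/\leftrightarrow/\s\varepsilon_{n,s}/\s$. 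By the Remark a norm preserving Banach algebras must satisfy $\beta'\le k\beta$; reading this off the Hasse diagram of Theorem~\ref{son seis}, it fails for $\varepsilon_{n,s}$, $/\pi_{n,s}\s$, $\s\varepsilon_{n,s}/$ and $/\s\varepsilon_{n,s}/\s$ (for each of these the dual norm lies strictly above it, or is incomparable to it), so none of the four preserves Banach algebras. It remains to prove that $\pi_{n,s}$ and $\s/\pi_{n,s}\s/$ do, with Banach algebra constant $1$; the constant is always $\ge1$ (take $A=\mathbb{K}$), so only the bound $\le1$ is at issue.

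\emph{The norm $\pi_{n,s}$.} I verify condition~(1) of Theorem~\ref{carne} directly. For a Banach algebra $A$ the natural product on $\os A$ is $(\otimes^n a)\cdot(\otimes^n b)=\otimes^n(ab)$, extended as the linearization in the first variable of $a\mapsto(\otimes^n L_a)(\,\cdot\,)$, with $L_a$ left multiplication by $a$; this makes it well defined. If $z=\sum_i\lambda_i\otimes^n a_i$ and $w=\sum_j\mu_j\otimes^n b_j$, then
$$\pi_{n,s}(z\cdot w)\ \le\ \sum_{i,j}|\lambda_i\mu_j|\,\|a_ib_j\|^n\ \le\ \|m(A)\|^n\Big(\sum_i|\lambda_i|\,\|a_i\|^n\Big)\Big(\sum_j|\mu_j|\,\|b_j\|^n\Big),$$
and taking the infimum over representations of $z,w$ yields $\pi_{n,s}(z\cdot w)\le\|m(A)\|^n\,\pi_{n,s}(z)\,\pi_{n,s}(w)$. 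Thus $\widetilde\otimes^{n,s}_{\pi_{n,s}}A$ is a Banach algebra with $\|m\|\le\|m(A)\|^n$, so its Banach algebra constant is~$1$.

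\emph{The norm $\s/\pi_{n,s}\s/$.} Write $\gamma:=\s/\pi_{n,s}\s/$ and $\eta:=/\pi_{n,s}\s$, so $\gamma=\s\eta/$ is projective. The key structural fact is that $\gamma$ coincides \emph{isometrically} with $\pi_{n,s}$ on $\os E$ whenever $E$ is an $\mathcal{L}_{\infty,\lambda}$-space for every $\lambda>1$ (in particular $E=C(K)$): indeed $\eta$ coincides with $\pi_{n,s}$ on such $E$ (preliminaries), and since $\s\eta/$ is the least projective s-tensor norm $\ge\eta$ one gets $\pi_{n,s}=\eta\le\gamma\le\pi_{n,s}$ there. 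I then verify condition~(2) of Theorem~\ref{carne}: a natural contraction $f\colon(\os_\gamma E)\otimes_{\pi_2}(\os_\gamma F)\to\os_\gamma(E\otimes_{\pi_2}F)$. As $\gamma$ is projective, $\os Q_E\colon\os_\eta\ell_1(B_E)\twoheadrightarrow\os_\gamma E$ is a metric surjection, likewise for $F$ and for $E\otimes_{\pi_2}F$ (using $\ell_1(B_E)\otimes_{\pi_2}\ell_1(B_F)=\ell_1(B_E\times B_F)\twoheadrightarrow E\otimes_{\pi_2}F$), and $\otimes_{\pi_2}$ preserves metric surjections; so it suffices to build a compatible contraction
$$\widehat f\colon\big(\os_\eta\ell_1(B_E)\big)\otimes_{\pi_2}\big(\os_\eta\ell_1(B_F)\big)\longrightarrow\os_\eta\big(\ell_1(B_E)\otimes_{\pi_2}\ell_1(B_F)\big),\qquad(\otimes^n u)\otimes(\otimes^n v)\mapsto\otimes^n(u\otimes v),$$
and then pass to the quotient. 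To produce $\widehat f$ one embeds the $\ell_1$-spaces isometrically into $\ell_\infty$-spaces and uses that, $\eta$ being injective and agreeing with $\pi_{n,s}$ on $\mathcal{L}_\infty$-spaces, each $\os_\eta\ell_1(\Gamma)$ sits isometrically inside $\os_{\pi_{n,s}}$ of a $C(K)$-space; $\widehat f$ is then assembled from the constant-$1$ map of condition~(2) for $\pi_{n,s}$ on those $C(K)$-spaces followed by the contraction induced by the bilinear multiplication of the host algebras, after checking that the composite lands back isometrically inside $\os_\eta\ell_1(B_E\times B_F)$. This is the symmetric $n$-fold analogue of Carne's treatment of $w_2'$ in \cite{Carne78}; it gives $\|f\|\le1$, hence Banach algebra constant~$1$. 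The hard part will be precisely this last book-keeping: controlling the constants through the injective/projective reductions so that nothing worse than $1$ appears — it is the coincidence $\gamma=\pi_{n,s}$ on $C(K)$-spaces, together with the sharp ($\le1$) estimate already obtained for $\pi_{n,s}$, that keeps the constant equal to $1$.
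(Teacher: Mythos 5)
Your elimination of the four non-candidates and your treatment of $\pi_{n,s}$ are both fine (you verify condition (1) of Theorem~\ref{carne} where the paper verifies condition (2); the two are interchangeable by that theorem, and the lower bound via $A=\mathbb{K}$ is a nice touch). The reduction of the $\s / \pi_{n,s} \s /$ case, via projectivity of $\s / \pi_{n,s} \s /$ and the fact that $\otimes_{\pi_2}$ preserves metric surjections, to a contraction $\widehat f$ at the level of the $\ell_1$-spaces also matches the paper exactly: your $\widehat f$ is the paper's map $\rho$.

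The gap is in your construction of $\widehat f$. You propose to embed each $\os_{/ \pi_{n,s} \s}\ell_1(B_E)$ isometrically into $\os_{\pi_{n,s}}$ of an $\mathcal{L}_\infty$-space, apply the constant-$1$ map for $\pi_{n,s}$ there, and then assert that the composite ``lands back isometrically inside'' $\os_{/ \pi_{n,s} \s}\big(\ell_1(B_E)\otimes_{\pi_2}\ell_1(B_F)\big)$. That last step is precisely what does not follow. The map $I_{\ell_1(B_E)}\otimes I_{\ell_1(B_F)}\colon \ell_1(B_E)\otimes_{\pi_2}\ell_1(B_F)\to\ell_\infty(\Gamma_E)\otimes_{\pi_2}\ell_\infty(\Gamma_F)$ is a contraction but in general \emph{not} an isometric embedding, because $\pi_2$ is not injective: the norm-one bilinear forms on $\ell_1(B_E)\times\ell_1(B_F)$ that compute the $\pi_2$-norm of an element of $\ell_1(B_E\times B_F)$ extend to the $\ell_\infty$-superspaces only at the cost of a Grothendieck-type constant. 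Consequently the injectivity of $/ \pi_{n,s} \s$ cannot be invoked to pull the $\pi_{n,s}$-estimate on the big space back to a $/ \pi_{n,s} \s$-estimate on $\ell_1(B_E)\otimes_{\pi_2}\ell_1(B_F)$, and your sketch yields neither $\|\widehat f\|\le 1$ nor any explicit constant. The paper circumvents this by linearizing differently: the bilinear data is encoded as an operator $J\colon\ell_1(B_E)\to\L\big(\ell_1(B_F),\ell_1(B_E)\otimes_{\pi_2}\ell_1(B_F)\big)$, and one first proves that $\phi\colon\os_{/ \pi_{n,s} \s}\L(\ell_1(B_Y),Z)\to\L\big(\os_{/ \pi_{n,s} \s}\ell_1(B_Y),\os_{/ \pi_{n,s} \s}Z\big)$ is a contraction. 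There the relevant superspace is $\L\big(\ell_1(B_Y),\ell_\infty(B_{Z'})\big)$, which is an $\mathcal{L}_\infty$-space into which $\L(\ell_1(B_Y),Z)$ \emph{does} embed isometrically, so the injectivity of $/ \pi_{n,s} \s$ genuinely applies; the map $\rho=B_\psi$ with $\psi=\phi\circ\otimes^{n,s}J$ then comes out with norm at most one. The ``book-keeping'' you defer is therefore not routine: it is the actual mathematical content of this half of the theorem, and the specific route you outline does not close.
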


\begin{proof} It follows from Theorem~\ref{son seis} and the previous remark that  $\pi_{n,s}$ and $\s / \pi_{n,s} \s /$ are the only candidates among natural s-tensor norms to preserve  Banach algebras.

First we prove that $\pi_s$ preserves Banach algebra. By Theorem ~\ref{carne}, it is enough to show, for any  pair of Banach spaces $E$ and $F$, that the mapping
$$f: \big( \os_{\pi_{n,s}} E \big) \otimes_{\pi_2} \big( \os_{\pi_{n,s}} F \big) \to \big( \os_{\pi_{n,s}} (E \otimes_{\pi_2} F) \big)$$ defined by
$$f \big( (\otimes^n x)  \otimes (\otimes^n y) \big) = \otimes^n (x \otimes y),$$ has norm less or equal than 1. Fix $\varepsilon >0$. Given $w \in \big( \os E \big) \otimes \big( \os F \big)$, we can write it as
$$w = \sum_{ i = 1}^r u_i \otimes v_i,$$ with $$\sum_{i = 1}^r \pi_{n,s}(u_i) \pi_{n,s}(v_i) \leq \pi_2(w) (1 + \varepsilon)^{1/3}.$$
Also, for each $i=1,\dots, r$ we write $u_i$ and $v_i$ as
$$u_i= \sum_{j=1}^{J(i)} \otimes^n x_j^i \in \os E,\quad v_i= \sum_{k=1}^{K(i)} \otimes^n y_k^i \in \os F,$$
with $$ \sum_{j=1}^{J(i)}  \|x_j^i\|^n \leq \pi_{n,s}(u_i) (1 + \varepsilon)^{1/3}, \quad  \sum_{k=1}^{K(i)} \|y_k^i\|^n \leq \pi_{n,s}(v_i) (1+\varepsilon)^{1/3}.$$
We  have
$$f(w) = \sum_{i=1}^r \sum_{\substack
{1\le j \leq J(i)\\ 1\le k \leq K(i)}} \otimes^n ( x_j^i \otimes y_k^i),$$
and then
\begin{align*}
\pi_{n,s}(f(w)) & \leq \sum_{i=1}^r \sum_{\substack
{1\le j \leq J(i)\\ 1\le k \leq K(i)}} \pi_2 (x_j^i \otimes y_k^i)^n \\
& = \sum_{i=1}^r \sum_{\substack
{1\le j \leq J(i)\\ 1\le k \leq K(i)}}  \|x_j^i\|^n \|y_k^i\|^n \\
& = \sum_{i=1}^r \big(\sum_{j \leq J(i)} \|x_j^i\|^n \big) \big( \sum_{k \leq K(i)} \|y_k^i\|^n \big) \\
& = \sum_{i=1}^r \pi_{n,s}(u_i) (1 + \varepsilon)^{1/3} \pi_{n,s}(v_i) (1+\varepsilon)^{1/3} \\
& = (1+\varepsilon)^{2/3} \sum_{ i = 1}^r \pi_2(u_i) \pi_2(v_i) \leq (1+ \varepsilon) \pi(w).
\end{align*}
From this we conclude that $\|f\| \leq 1$.

\bigskip
To prove that $\s / \pi_{n,s} \s /$ preserves Banach algebras we  need two technical lemmas.

\begin{lemma} \label{lemma1}
Let $Y$ and $Z$ be Banach spaces. The operator
$$ \phi: \os_{/ \pi_{n,s} \s} \L(\ell_1(B_Y),Z) \to \L \big( \os_{/ \pi_{n,s} \s} \ell_1(B_Y), \os_{/ \pi_{n,s} \s} Z \big)$$
given by $$ \phi(\otimes^n T )( \otimes^n u)= \otimes^n Tu,$$
has norm less or equal than 1.
\end{lemma}

\begin{proof}
The mapping
\begin{eqnarray*} \L \big( \ell_1(B_Y), \ell_\infty(B_{Z'}) \big) & \to & \L \big( \os_{/ \pi_{n,s} \s} \ell_1(B_Y), \os_{/ \pi_{n,s} \s} Z \big)\\
T &\mapsto &\otimes^n T\end{eqnarray*}
is an $n$-homogeneous polynomial, which has norm one by the metric mapping property of the norm $ / \pi_{n,s} \s$. As a consequence, its linearization is a norm one operator from
$ \os_{/ \pi_{n,s} \s} \L \big( \ell_1(B_Y), \ell_\infty(B_{Z'}) \big)$ to
$\L \big( \os_{/ \pi_{n,s} \s} \ell_1(B_Y), \os_{/ \pi_{n,s} \s} Z \big)$.
Since $\L \big( \ell_1(B_Y), \ell_\infty(B_{Z'}) \big)$ is an $\L_{\infty}$ space we have
$$ \os_{/ \pi_{n,s} \s} \L \big( \ell_1(B_Y), \ell_\infty(B_{Z'}) \big) \overset 1 = \os_{ \pi_{n,s} } \L \big( \ell_1(B_Y), \ell_\infty(B_{Z'}) \big).$$
This shows that the canonical mapping
\begin{equation*}
\xymatrix{ \os_{/ \pi_{n,s} \s} \L \big( \ell_1(B_Y), \ell_\infty(B_{Z'}) \big)  \ar[r]  & \L \big( \os_{/ \pi_{n,s} \s} \ell_1(B_Y), \os_{ / \pi_{n,s} \s} \ell_\infty(B_{Z'}) \big) }
\end{equation*}
has norm 1.

On the other hand, the following diagram commutes
\begin{equation*}
\xymatrix{
\os_{/ \pi_{n,s} \s} \L \big( \ell_1(B_Y), \ell_\infty(B_{Z'}) \big)  \ar[rr] & & \L \big( \os_{/ \pi_{n,s} \s} \ell_1(B_Y), \os_{ / \pi_{n,s} \s} \ell_\infty(B_{Z'}) \big)  \\
\os_{ / \pi_{n,s} \s} \L(\ell_1(B_Y), Z) \ar[rr]^{\phi} \ar@{^{(}->}[u] & & \L( \os_{/ \pi_{n,s} \s} \ell_1(B_Y), \os_{ / \pi_{n,s} \s} Z )  \ar@{^{(}->}[u] }.
\end{equation*}
Here the vertical arrows are the natural inclusion, which  are actually isometries since the norm $ / \pi_{n,s} \s$ is injective. The horizontal arrow above is the canonical mappings whose norm was shown to be one. Therefore, the norm of $\phi$ must be less or equal to one.
\end{proof}

Before we state our next lemma, we observe that linear operators from $X_1$ to $\L(X_2,X_3)$ identify (isometrically) with bilinear operators  from $X_1\times X_2$ to $X_3$ and, consequently, with linear operators from $X_1\otimes_\pi X_2$ to $X_3$. The isometry is given by \begin{eqnarray}
\L(X_1,\L(X_2,X_3)) &\to &\L(X_1\otimes_\pi X_2,X_3) \nonumber \\
T&\mapsto& B_T,\label{identificacion}
\end{eqnarray}
where $B_T(x_1\otimes x_2)= T(x_1)(x_2)$.

\begin{lemma}
Let $E$ and $F$ be Banach spaces. The operator
$$ \rho: \big( \os_{/ \pi_{n,s} \s} \ell_1(B_E) \big) \otimes_{\pi_2} \big( \os_{/ \pi_{n,s} \s} \ell_1(B_F) \big) \to \os_{/ \pi_{n,s} \s} \big( \ell_1(B_E) \otimes_{\pi_2} \ell_1(B_F) \big)$$
given by
$$\rho\big(( \otimes^n u ) \otimes ( \otimes^n v )\big) = \otimes^n ( u \otimes v),$$
has norm less or equal than 1.
\end{lemma}

\begin{proof}
If we take $Y=F$ and $Z=\ell_1(B_E) \otimes_{\pi_2} \ell_1(B_F)$ in Lemma~\ref{lemma1},  we see that
the operator
$$ \phi: \os_{/ \pi_{n,s} \s} \L(\ell_1(B_F),\ell_1(B_E) \otimes_{\pi_2} \ell_1(B_F)) \to \L \big( \os_{/ \pi_{n,s} \s} \ell_1(B_E), \os_{/ \pi_{n,s} \s} (\ell_1(B_E) \otimes_{\pi_2} \ell_1(B_F)) \big)$$ has norm at most $1$.
Also the application $J: \ell_1(B_E) \to \L \big( \ell_1(B_F), \ell_1(B_E) \otimes_{\pi_2} \ell_1(B_F) \big)$ defined by $Jz(w)~=~z~\otimes~w$ has norm 1.
Therefore, the norm of the map  $\psi := \phi \circ \otimes^{n,s} J$ between the corresponding  $ / \pi_{n,s} \s  $-tensor products is at most one.

Now, with the identification given in (\ref{identificacion}), the operator $ \rho $ is precisely $B_\psi$, and since (\ref{identificacion}), we conclude that $\rho$ has norm at most one.
\end{proof}

Now we are ready to prove that $ \s / \pi_{n,s} \s /$ preserves Banach algebras with Banach algebra constant 1.
Again by Theorem~\ref{carne}, it is enough to show that, for Banach spaces $E$ and $F$, the map
$$f: \big( \os_{\s / \pi_{n,s} \s /} E \big) \otimes_{\pi_2} \big( \os_{\s / \pi_{n,s} \s /} F \big) \to  \os_{\s / \pi_{n,s} \s /} (E \otimes_{\pi_2} F)$$ defined by
$$f \big( (\otimes^n x)  \otimes (\otimes^n y) \big) = \otimes^n (x \otimes y),$$ has norm at most one.
The following diagram, where the vertical arrows are the canonical quotient maps, commutes:
\begin{equation*}
\xymatrix{  \big( \os_{/ \pi_{n,s} \s} \ell_1(B_E) \big) \otimes_{\pi_2} \big( \os_{/ \pi_{n,s} \s} \ell_1(B_F) \big) \ar[rr]^{\rho} \ar@{->>}[d]  & & \big( \os_{ / \pi_{n,s} \s } (\ell_1(B_E) \otimes_{\pi_2} \ell_1(B_F))  \big) \ar@{->>}[d] \\
\big( \os_{\s / \pi_{n,s} \s /} E  \big) \otimes_{\pi_2} \big( \os_{\s / \pi_{n,s} \s /} F  \big) \ar[rr]^{f} & & \big( \os_{\s / \pi_{n,s} \s /} (E \otimes_{\pi_2} F)  \big) }.
\end{equation*}
By the previous Lemma,  $\rho$ has norm less than or equal to one, and so is the norm of $f$, since the other mappings are quotients.
\end{proof}

\end{document}